\begin{document}

%
\setcounter{page}{1}
\publyear{2021}
\papernumber{0001}
\volume{XXX}
\issue{X}
%

\title{Construction of networks by associating with submanifolds of almost Hermitian manifolds}

\address{Department of Mathematics, Science Faculty, Ege University, Bornova, 35100, Izmir, Turkey}

\author{Arif Gursoy\thanks{Corresponding Author}\\
Department of Mathematics \\
Ege University\\ Bornova, 35100 Izmir, Turkey\\
arif.gursoy{@}ege.edu.tr}

\maketitle

\runninghead{A. Gursoy}{Construction of networks by associating with submanifolds of almost Hermitian manifolds}

\begin{abstract}
The idea that data lies on a non-linear space has brought up the concept of manifold learning \cite{ZX} as a part of machine learning and such notion is one of the most important research field of today. 
\end{abstract}

\begin{keywords}
Graph theory, digraphs, almost Hermitian manifold, submanifolds, manifold learning, machine learning, artificial intelligence
\end{keywords}

\section{Introduction}
Graph theory can be used to model computer networks, social networks, communications networks, information networks, software design, transportation networks, biological networks, etc. So this theory is applicable in many real-world mathematical modeling. Therefore, this theory is the most active areas of mathematical research.

On the other hand, one of the most active research area of differential geometry is the submanifold theory of complex manifolds. A submanifold of an almost Hermitian manifold is characterized by the behavior of tangent space of the submanifold of almost Hermitian manifold under the complex structure of the ambient manifold. In this way, we have various submanifolds titled as holomorphic, totally real, CR, slant, semi slant, hemi-slant, bi-slant for almost Hermitian manifolds. In fact, the theory of submanifolds of almost Hermitian manifolds is still main active area of complex differential geometry, see:\cite{ALVY, Lee2015, Vilcu2018} for recent results.

Manifold learning method is one of the most exciting developments in machine learning recently. Manifold learning has been applied in utilizing semi-supervised learning \cite{ZX}. Moreover, Vakulenko and Radulescu have used the theory of invariant and inertial manifold to prove the realization of prescribed dynamics by networks in patterning by centralized gene networks \cite{Vakulenko}. Furthermore, manifolds also play an important role in public health. Fiorini has defined the Riemannian manifold, which is isomorphic to traditional information geometry Riemannian manifold, for noise reduction in theoretical computerized tomography providing many competitive computational advantages over the traditional Euclidean approach \cite{Fiorini}. Besides, Monti et. al. have introduced a general framework, geometric deep learning, enabling to design of convolutional deep architectures on manifolds and graphs \cite{MBMRSB}.

Also, Carriazo and Fernandez \cite{carriazo2004submanifolds} have constructed a relation between slant surface and graph theory. Later, they have related graph theory with vector spaces of even dimension \cite{carriazo2009submanifolds, boza2012graphs}. Their work was restricted to slant submanifolds. We believe that further use of graph theory is possible in the theory of submanifolds.

By considering vast literature of graph theory and submanifold theory, one expects more relations between these research areas. In this direction, the aim of this paper is to examine the relation among various submanifolds of almost Hermitian manifolds by using graph theory. We note that our approach is different from the approach considered in \cite{carriazo2004submanifolds} and \cite{carriazo2009submanifolds}. They only considered adapted frame of slant surface and they used them to characterize CR-submanifolds by means of trees. Later they have extended this approach for weakly associated graph. In this paper, we give relations between submanifolds of Hermitian manifolds in terms of graph theory notions.

\section{Preliminaries}

In this section, we are going to recall certain notions used graph theory to be used in this paper from \cite{bang2008digraphs, belmonte2019new, bondy1976graph, chartrand2010graphs, cormen2009introduction, rosen2013discrete}.
For those who are not familiar with the theory of graphs (especially for readers working with the submanifolds theory), we specifically recall the basic definitions from graph theory.

A graph $G = (V, E)$ consists of a nonempty set $V$ of vertices and a set $E$ of edges. Each edge has either one or two vertices connected with it, called its endpoints. An edge connects its endpoints. Two distinct vertices $u, v$ in a graph G are called adjacent (or neighbors) in $G$ if there is an edge e between $u$ and $v$, where the edge $e$ is called incident with the vertices $u$ and $v$ and $e$ connects $u$ and $v$. The set of all neighbors of a vertex $v$ of $G = (V, E)$ is denoted by $N(v)$. If $A \subset V$, we denote by $N(A)$ the set of all vertices in $G$ that are adjacent to at least one vertex in $A$. The degree of a vertex in a graph is the number of edges incident with it. The degree of the vertex $v$ is denoted by $d(v)$ and $d(v) = \vert N(v)\vert$. The graph theory can be divided into two branches as undirected and directed graphs. \cite{rosen2013discrete}

A directed graph (digraph) $D$ is a finite nonempty set of objects called vertices together with a set of ordered pairs of distinct vertices of $D$ called directed edges or arcs. For a digraph $D=(V, A)$, the vertex set of $D$ is denoted by $V(D)$ or simply $V$ and the arc set of $D$ is denoted by $A(D)$ or $A$. Each arc is an ordered pair of vertices. The arc $(u,v)$ is said to start at $u$ and end at $v$. The in-degree of a vertex v, $d^-(v)$, is the number of edges which end at $v$. The out-degree of $v$, $d^+(v)$, is the number of edges with $v$ as their initial vertex. Also, for a vertex $v \in V(D)$, $N^{-}_D(v)$ and $N^{+}_D(v)$ are respectively called out-neighbors and in-neighbors where $N^{-}_D(v)=\{u \vert (u,v) \in A(D), u \in V(D)\}$ and $N^{+}_D(v)=\{u | (v,u) \in A(D), u \in V(D)\}$.  \cite{chartrand2010graphs, bang2008digraphs, sedgewick2015algorithms, rosen2013discrete}

In a digraph $D = (V, A)$, given a pair of vertices $u$ and $v$, whether or not there is a path from $u$ to $v$ in the digraph is useful to know. The transitive closure of $D$ is to construct a new digraph, $D^* = (V, A^*)$, such that there is an arc $(u, v)$ in $D^*$ if and only if there is a path from $u$ to $v$ in $D$. \cite{cormen2009introduction}

A walk $W = x_1 a_1 x_2 a_2 x_3 ... x_{k-1} a_{k-1}x_k$ is a sequence of vertices $x_i$ and arcs $a_j$ in $D$ such that the tail and head of $a_i$ is $x_i$ and $x_{i+1}$ for every $i \in [k - 1]$, respectively. The set of vertices and arcs of the walk $W$ are denoted $V(W)$ and $A(W)$, respectively. $W$ is denoted without arcs as $x_1x_2...x_k$ and shortly $(x_1,x_k)$-walk. If $x_1 = x_k$ then $W$ is a closed walk, and otherwise $w$ is an open walk. If $W$ is an open walk, the vertices $x_1$ and $x_k$ are end-vertices and named as the initial and the terminal vertex of $W$, respectively. The length of a walk is the number of its arcs and the walk $W$ above has length $k-1$. \cite{bang2008digraphs}

A trail is a walk in which all arcs are distinct. $W$ is called a path, if the vertices of a trail $V(W) \subset V(D)$ are distinct. If the vertices $x_1, x_2, ..., x_{k-1}$ are distinct, $k \geq 3$ and $x_1 = x_k$, then $W$ is a cycle. The longest path in $D$ is a path of maximum length in $D$. \cite{bang2008digraphs}

\begin{proposition}
\label{prop1} \cite{bang2008digraphs} Let $D$ be a digraph and let $ x, y$ be a pair of distinct vertices in $D$. If $D$ has an $(x, y)$-walk $W$, then $D$ contains an $(x, y)$-path $P$ such that $A(P) \subseteq A(W)$. If $D$ has a closed $(x, x)-$walk $W$, then $D$ contains a cycle $C$ through $x$ such that $A(C) \subseteq A(W)$.
\end{proposition}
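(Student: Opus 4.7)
The plan is to establish the two assertions separately, deriving the second from the first by a single prepending step. In both cases the principal operation is the \emph{shortcutting} of a repeated vertex: if a walk visits some vertex $v$ at two positions $i<j$, excising the intermediate segment yields a strictly shorter walk between the same endpoints whose arc set is contained in the original one.

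For the first claim, I would induct on the length $\ell=k-1$ of $W=x_1a_1x_2\cdots a_{k-1}x_k$. If the vertices $x_1,\dots,x_k$ are pairwise distinct then $W$ is itself the desired $(x,y)$-path. Otherwise, pick indices $1\le i<j\le k$ with $x_i=x_j$ and form the shortcut walk
\[W'=x_1a_1x_2\cdots a_{i-1}x_ia_jx_{j+1}\cdots a_{k-1}x_k,\]
which is still an $(x,y)$-walk, has strictly smaller length, and satisfies $A(W')\subseteq A(W)$; the inductive hypothesis applied to $W'$ then delivers the required path.

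For the second claim, let $W=x_1a_1\cdots a_{k-1}x_k$ be a closed $(x,x)$-walk, write its first arc as $a_1=(x,x_2)$, and let $j$ be the smallest index with $1<j\le k$ and $x_j=x$. Assuming $x_2\neq x$, the segment $W''=x_2a_2\cdots a_{j-1}x_j$ is an $(x_2,x)$-walk whose arcs lie in $A(W)\setminus\{a_1\}$, so the first claim produces an $(x_2,x)$-path $P'$ with $A(P')\subseteq A(W'')$. Prepending $a_1$ to $P'$ yields a closed walk $C$ through $x$ whose vertex sequence is $x$ followed by the vertices of $P'$; since the latter are distinct and only the terminal one equals $x$, the internal vertices of $C$ are pairwise distinct, so $C$ is a cycle through $x$ with $A(C)\subseteq A(W)$. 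The main (and only) subtlety, which I expect to be the minor obstacle, is the degenerate case $x_2=x$ where $a_1$ is a loop at $x$: under the paper's convention that a cycle requires $k\ge 3$ this must either be ruled out by assuming $D$ is loopless, or handled by instead choosing the smallest index $j\ge 3$ with $x_j=x$, which exists whenever $W$ is not merely a concatenation of loops at $x$, and then running the same argument on that subwalk.
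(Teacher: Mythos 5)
Your argument is correct and is the standard shortcutting/induction proof of this classical fact; the paper itself gives no proof, simply importing the result from the cited reference, so there is nothing to diverge from. The one worry you flag --- the first arc of the closed walk being a loop at $x$ --- is moot here, since the paper defines arcs as ordered pairs of \emph{distinct} vertices, so its digraphs are loopless and your main line of argument goes through without the fallback.
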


An oriented graph is a digraph with no cycle of length two \cite{bang2008digraphs}. For a digraph $D$, the Underlying Graph of $D$ is the undirected graph engendered utilizing all vertices in $V(D)$, and superseding all of the arcs in $A(D)$ with undirected edges. \cite{bondy1976graph}

If a digraph $D$ has an $(x,y)$-walk, then the vertex $y$ is reachable from the vertex $x$. Every vertex is reachable from itself specifically. By Proposition \ref{prop1}, $y$ is reachable from $x$ if and only if $D$ contains an $(x, y)$-path. If every pair of vertices in digraph $D$ is mutually reachable then $D$ is strongly connected (or shortly strong). A strong component of digraph $D$ is a maximal induced strong subdigraph in $D$. If $D_1, ... ,D_t$ are the strong components of $D$, then precisely $V(D_1) \cup ... \cup V(D_t) = V(D)$. If a digraph $D$ is not strongly connected and if the underlying graph of $D$ is connected, then $D$ is said to be weakly connected. \cite{bang2008digraphs, sedgewick2015algorithms}

Pseudograph is a graph having parallel edges and loops, and multigraph is a pseudograph with no loops. If every pair of distinct vertices are adjacent in a multigraph then the multigraph is complete.

A multigraph $H$ is called as $p-$partite if there is a partition into p sets $V(H) = V_1 \cup V_2 \cup ... \cup V_p$ where $V_i \cap V_j = \O$ for every $i \neq j$. In particular, when $p = 2$ the graph is called a bipartite graph. A bipartite graph $B$ is denoted by $B = (V_1, V_2;E)$. If the edge $(x,y)$ is in $p-$partite multigraph $H$ where all $x \in V_i$, $y \in V_j$ for $i \neq j$ then $H$ is complete $p-$partite. \cite{bang2008digraphs}

A digraph $D = (V,A)$ is symmetric if arc $(x,y) \in A$ implies arc $(y,x) \in A$. A matching $M$ is an arc set having no common end-vertices and loops in $D$. Also, the arcs of $M$ are independent if $M$ is a matching. If a matching $M$ implicates the highest number of arcs in $D$, then $M$ is maximum. Besides, a maximum matching is perfect if it has $\frac{|A(D)|}{2}$ arcs. A set $Q$ of vertices in a directed pseudograph $H$ is independent if there are no arcs between vertices in $Q$. The independence number of $H$ is the size of the independent set having maximum cardinality in $H$. A coloring of a digraph $H$ is a partition of $V(H)$ into disjoint independent sets. The minimum number of independent sets in the coloring of $H$ is the chromatic number of $H$. A simple directed graph is a digraph that has no multiple arcs or loops. if a digraph contains no cycle, then it is acyclic and called acyclic digraph. \cite{bang2008digraphs}

The eccentricity $e(v)$ of a vertex $v$ is the distance from $v$ to the farthest vertex from itself. The radius ($rad$) of $D $ is the minimum eccentricity, and the diameter ($diam$) is the maximum eccentricity. Besides, a vertex $v$ is central if $e(v) = rad(D)$, and $v$ is peripheral if $e(v) = diam(D)$. \cite{chartrand1997distance}

Let $D=(V,A)$ be a digraph, $V(D)=n$ and $S$ $\subset$ $V(D)$. $S$ is a dominating set of $D$ if each vertex $v \in V(D) - S$ is dominated by at least a vertex in $S$. A dominating set of $D$ having the smallest cardinality is called the minimum dominating set of $D$. Also, the cardinality of the minimum dominating set is called the domination number of $D$ \cite{lee1998domination, pang2010dominating}

Let $r$ be a root vertex in $D$. A directed spanning tree $T$ starting from $r$ is a subdigraph of $D$ such that the undirected form of $T$ is a tree and there is a directed unique $(r,v)$-path in $T$ for each $v \in V(T)-r$. \cite{bang2008digraphs}

The vertex-integrity of a digraph $D$ is defined by $I(D) = min\{|F| + m(D - F): F\subseteq V(D)\}$, where $m(D - F)$ indicates the maximum order of a strong component of $D - F$. If $I(D) = |F| + m(D - F)$ then $F$ is called as an $I$-set of $D$. In addition, the arc-integrity of a digraph $D$, shortly $I^{'}(D)$, is described as the minimum value of $\{|F| + m(D - F): F\subseteq A(D)\}$. The set $F$ is called as an $I^{'}$-set of $D$ if $I^{'}(D) = |F| + m(D - F)$. \cite{Vandell1996}

\begin{proposition}\label{prop.integrity} \cite{Vandell1996}
If $S$ is a subdigraph of $D$ then $I(S) \leq I(D)$ and $I^{'}(S) \leq I^{'}(D)$.
\end{proposition}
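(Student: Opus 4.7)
The plan is to use a ``restriction'' argument: take any optimal $I$-set for the larger digraph $D$, intersect it with $V(S)$, and show that this intersection is a feasible (not necessarily optimal) candidate in the definition of $I(S)$, giving the required inequality. The analogous argument, performed on arcs instead of vertices, will handle $I'$.

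Concretely, for the first inequality, I would start by choosing an $I$-set $F^* \subseteq V(D)$ so that $I(D) = |F^*| + m(D - F^*)$; such a set exists because the minimum is taken over finitely many subsets. Then I set $F = F^* \cap V(S)$, which is a subset of $V(S)$ and is therefore a legitimate candidate for the minimization defining $I(S)$. Trivially $|F| \leq |F^*|$. The key geometric step is to observe that $S - F$ is a subdigraph of $D - F^*$: its vertex set $V(S) \setminus F^*$ is contained in $V(D) \setminus F^*$, and since $S$ is a subdigraph of $D$ each arc of $S - F$ is an arc of $D$ missing both endpoints in $F^*$.

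The main technical step is then to conclude $m(S - F) \leq m(D - F^*)$. For this I would argue that every strong component $C$ of $S - F$ has its vertex set contained in the vertex set of some strong component of $D - F^*$: any two vertices of $C$ are mutually reachable in $S - F$, hence by inclusion of subdigraphs also mutually reachable in $D - F^*$, and therefore belong to the same strong component there. Hence $|V(C)| \leq m(D - F^*)$, and taking the maximum over components of $S - F$ gives $m(S - F) \leq m(D - F^*)$. Combining, $I(S) \leq |F| + m(S - F) \leq |F^*| + m(D - F^*) = I(D)$.

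For the arc-integrity, the same scheme applies verbatim with $F^* \subseteq A(D)$ an optimal $I'$-set of $D$ and $F = F^* \cap A(S)$; the subdigraph $S - F$ is again contained in $D - F^*$ (same vertex set as $S$, arcs of $S$ not in $F^*$), and the strong-component containment argument carries over unchanged. I do not anticipate a serious obstacle; the one point that must be stated carefully is that strong connectivity is preserved under passage to supergraphs on the same (or a larger) vertex set, which is why a strong component of $S - F$ sits inside a single strong component of $D - F^*$ rather than being split between several.
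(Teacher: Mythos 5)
The paper does not prove this proposition at all --- it is quoted verbatim from Vandell's thesis \cite{Vandell1996} and used as a black box --- so there is nothing to compare your argument against within the paper itself. Your restriction argument is nevertheless correct and is the standard proof of monotonicity of (vertex- and arc-) integrity: taking an optimal $I$-set $F^*$ of $D$, the candidate $F = F^*\cap V(S)$ satisfies $|F|\le|F^*|$, and since $S-F$ is a subdigraph of $D-F^*$, mutual reachability passes to the supergraph, so each strong component of $S-F$ lies inside a single strong component of $D-F^*$, giving $m(S-F)\le m(D-F^*)$ and hence $I(S)\le I(D)$; the arc version is identical. The only point worth adding is the degenerate case $V(S)\subseteq F^*$, where $S-F$ is empty and one uses the convention $m(\emptyset)=0$; with that noted, the proof is complete.
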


\section{Construction of digraphs by relations among submanifolds of almost Hermitian manifolds}

Let $( M,g)$ be a Riemannian manifold. $(M,g)$ is called an almost Hermitian manifold if there is a (1,1) tensor field on $M$ such that $J^2=-I$, where $I$ is the identity map on the tangent bundle of $M$, and $g(JX,JY)=g(X,Y)$ for vector fields $X,Y$ on $M$. Moreover if $J$ is parallel with respect to any vector field $X$, then $(M,J,g)$ is called a Kaehler manifold \cite{yano1985manifolds}. There are various submanifolds of an almost Hermitian manifold based on the behavior of the tangent space of the submanifold at a point under the almost complex structure $ J$. Let $N$ be a submanifold of an almost Hermitian manifold and $T_pN$ the tangent space at a point p belongs to $N$. Then if $T_pN$ is invariant with respect to $J_p$ for any point $p$, then $N$ is called holomorphic (or complex) submanifold \cite{yano1985manifolds}. We denote the normal space at $p$ by $T_pN^\perp$. A submanifold of an almost Hermitian manifold is called an anti-invariant submanifold if $JT_p N \subseteq T_pN^\perp$ \cite{yano1985manifolds}. As a generalization of holomorphic submanifold and anti-invariant submanifolds, a submanifold $M$ of a Kaehler manifold $N$ is called CR-submanifold \cite{bejancu2012geometry} if there are two orthogonal complementary distributions $\mathcal{D}_1$ and $\mathcal{D}_2$ such that $\mathcal{D}_1$ is invariant with respect to $J$ and $\mathcal{D}_2$ is anti-invariant with respect to $J$ for every point $p \in M$. It is clear that if $\mathcal{D}_1=\{0\}$, then a CR-submanifold becomes an anti-invariant submanifold. If $\mathcal{D}_2=\{0\}$, then $M$ becomes a holomorphic submanifold. Another generalization of holomorphic submanifolds and anti-anti-invariant submanifolds is slant submanifolds. Let $N$ be a submanifold of an almost Hermitian manifold $M$. The submanifold $N$ is called slant \cite{chen1990geometry} if for each non-zero vector $X$ tangent to $N$ the angle $\theta(X)$ between $JX$ and $T_pN$ is a constant, i.e, it does not depend on the choice of $p \in M$ and $X \in T_p N$. $\theta$ is called the slant angle. It is clear that if $\theta(X)=0$ then $N$ becomes a holomorphic submanifold. If $\theta(X) = \pi / 2$, $N$ becomes an anti-invariant submanifold. We will use the $v_1$, $v_2$, $v_3$, and $v_4$ to represent the submanifolds holomorphic, CR, anti-invariant and slant, respectively.

Digraph $D_1 = (V,A)$ has four vertices, $V(D_1) = \{v_1,v_2,v_3,v_4\}$, and four arcs, $A(D_1)=\{(v_2,v_1),(v_2,v_3),(v_4,v_1),(v_4,v_3)\}$ in Fig. \ref{fig1}. $D_1$ has the maximum length of one as the longest path. $D_1$ has 2 vertices ($v_2$ and $v_4$) which are not reachable. Topological sort of $D_1$ is $v_4-v_2-v_3-v_1$. $rad(D_1)=1$, the radius of $D_1$ is $v_2 \to v_1$. $diam(D_1)=1$, the diameter of $D_1$ is the same as the radius. Also, in $D_1$, there is no center vertex, but two peripheral vertices such as $v_2$ and $v_4$.

    \begin{figure}
    \centering
\begin{tikzpicture}[->,>=stealth',shorten >=1pt,thick]
\tikzset{vertex/.style = {shape=circle,draw,minimum size=1.5em}}
\tikzset{edge/.style = {->,> = latex'}}
\node[vertex] (1) at  (0,0) {$v_1$};
\node[vertex] (2) at  (3,0) {$v_2$};
\node[vertex] (3) at  (3,-3) {$v_3$};
\node[vertex] (4) at  (0,-3) {$v_4$};
\draw[edge] (2) to (1);
\draw[edge] (2) to (3);
\draw[edge] (4) to (1);
\draw[edge] (4) to (3);
\end{tikzpicture}
    \caption{Digraph $D_1$ built by submanifolds holomorphic, CR, anti-invariant and slant}
    \label{fig1}
    \end{figure}
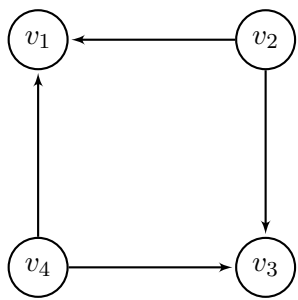

\begin{theorem}
\label{teo31}
For a digraph $D_1$ constructed by the four submanifolds holomorphic, CR, anti-invariant and slant considering as the vertices $v_1$, $v_2$, $v_3$, and $v_4$, respectively,

\begin{enumerate}[i]
\item $D_1$ is a bipartite digraph as well as a complete bipartite digraph.
\item $D_1$ has a perfect matching.
\item The independence number of $D_1$ is 2.
\item The chromatic number of $D_1$ is 2.
\item $D_1$ has no directed spanning tree.
\item The domination number of $D_1$ is 2.

\end{enumerate}

\end{theorem}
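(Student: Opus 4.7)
The plan is to work directly from the explicit description $V(D_1)=\{v_1,v_2,v_3,v_4\}$, $A(D_1)=\{(v_2,v_1),(v_2,v_3),(v_4,v_1),(v_4,v_3)\}$, and verify each of the six assertions by applying the relevant definition from the Preliminaries. The structural observation driving everything is that $\{v_2,v_4\}$ is precisely the set of vertices of positive out-degree (and zero in-degree), while $\{v_1,v_3\}$ is the set of vertices of positive in-degree (and zero out-degree); every arc goes from the first set to the second.

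For (i) I would take the partition $V_1=\{v_2,v_4\}$, $V_2=\{v_1,v_3\}$ and check that no arc has both endpoints in the same part, giving bipartiteness; then observe that all four possible arcs $(v_i,v_j)$ with $v_i\in V_1$, $v_j\in V_2$ are actually present in $A(D_1)$, which gives completeness. For (ii) I would exhibit $M=\{(v_2,v_1),(v_4,v_3)\}$ and note it has no shared endpoints and has $|A(D_1)|/2=2$ arcs, matching the definition of perfect matching. For (iii) the same partition gives two independent sets of size $2$; any $3$-element subset must contain one vertex from $V_1$ and one from $V_2$, and completeness of the bipartite digraph forces an arc between them, so $2$ is optimal. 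Item (iv) follows at once from (iii) and the bipartition, since coloring $V_1$ and $V_2$ with two distinct colors yields a valid coloring, while the presence of arcs prevents a $1$-coloring.

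Claim (v) is the one step that feels less automatic, but it reduces to a short root-by-root check. A directed spanning tree rooted at $r$ requires a directed $(r,v)$-path for every $v\in V(D_1)\setminus\{r\}$. Since $v_1$ and $v_3$ have out-degree $0$, they cannot be roots. If $r=v_2$, then no arc leaves $\{v_1,v_2,v_3\}$ toward $v_4$, so $v_4$ is unreachable; the symmetric argument excludes $r=v_4$. Finally, for (vi) I would argue the lower bound by exhausting the four singletons: $v_1$ and $v_3$ dominate nothing, $\{v_2\}$ fails to dominate $v_4$, and $\{v_4\}$ fails to dominate $v_2$, so no dominating set of size $1$ exists; then $\{v_2,v_4\}$ is exhibited as a dominating set of size $2$.

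The only place I expect a small subtlety is item (v), because one has to be careful that "directed spanning tree" requires paths \emph{from} the root, not merely weak connectivity; once that is pinned down, each case is immediate from the out-neighborhoods. The rest is essentially mechanical verification against the definitions recalled in Section 2.
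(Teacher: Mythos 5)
Your proposal is correct and follows essentially the same route as the paper's own proof: the same bipartition $\{v_2,v_4\}$ versus $\{v_1,v_3\}$, the same matching $\{(v_2,v_1),(v_4,v_3)\}$, the same independent and dominating sets, and the same reasoning for the absence of a directed spanning tree. The only difference is that you spell out the lower-bound and root-by-root eliminations in items (iii), (v), and (vi) more explicitly than the paper does, which is a welcome refinement rather than a new approach.
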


\begin{proof}
\begin{enumerate}[i]
\item There exists a partition $V_1$ and $V_2$ of $V(D_1)$ into two partite sets for the submanifolds in $D_1$: $V_1=\{v_1,v_3\}$ and $V_2=\{v_2,v_4\}$. Owing to $V(D_1) = V_1 \cup V_2$ and $V_1 \cap V_2 = \O$, then $D_1$ is a bipartite digraph. \\
Besides, for every submanifold, $x \in V_1, y \in V_2$, a connection from x to y (i.e. an arc $(x,y)$) is in $D_1$. Therefore, $D_1$ is a complete bipartite digraph.

\item There is a matching $M=\{(v_2,v_1),(v_4,v_3)\} \subset A(D_1)$ in $D_1$. Each element (arc or connection between two submanifolds) in $M$ is independent, i.e. no common vertices, and $M$ is maximum. Also, $M$ is perfect so that $|M|=\frac{|A(D_1)|}{2}$. It is obvious that $D_1$ has a perfect matching.

\item The subset $\widetilde{V}=\{v_2, v_4\} \subset V(D_1)$ is one of the independent sets having maximum cardinality and the size of maximum independent submanifolds set is two. This also means that there is no relation between submanifolds $v_2$ and $v_4$. Then, the independence number of $D_1$ is two.

\item $V_1=\{v_2, v_4\}$ and $V_2=\{v_1, v_3\}$ are two subsets of $V(D_1)$. $V_i (i=1,2)$ are all independent sets providing the minimum number of cardinality at the same time. Hence, the minimum number of independent sets of $D_1$ is two. Then, the chromatic number of $D_1$ is two.

\item There is no root vertex where a subdigraph T of $D_1$ contains a directed path from the root to any other vertex in $V(D_1)$. Then, $D_1$ has no directed spanning tree.

\item There is a subset $\widetilde{V}=\{v_2, v_4\} \subset V(D_1)$ that including minimum cardinality of vertices in $D_1$. Considering this subset, for each vertex $v \in \widetilde{V}$ and $u \in V(D_1)-\widetilde{V}$, (v,u) is an arc in $D_1$. The domination number is two, because of no smaller cardinality of dominating sets in $D_1$.
\end{enumerate}
\end{proof}

\begin{corollary}\label{cor31}
In the submanifold network represented by $D_1$ in Fig. \ref{fig1}, the submanifolds, CR $(v_2)$ and slant $(v_4)$, cannot be derived by the other submanifolds, because the in-degree of these vertices (submanifolds) are zero in $D_1$, $d^{-}(v_2) = d^{-}(v_4) = 0$.
In addition, whereas CR and slant subamnifolds cannot be mutually derived as between holomorphic $(v_1)$ and anti-invariant $(v_3)$, holomorphic and anti-invariant submanifolds can be derived separately from CR and slant from $N^{-}_{D_1}(v_1) = N^{-}_{D_1}(v_3) = \{v_2, v_4 \}$.
\end{corollary}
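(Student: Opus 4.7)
The plan is to verify the two claims of the corollary by direct inspection of the arc set $A(D_1)=\{(v_2,v_1),(v_2,v_3),(v_4,v_1),(v_4,v_3)\}$ already fixed in Fig.~\ref{fig1}, and then restate the combinatorial conclusion in terms of derivability of submanifolds. Since each claim is a statement about the in-degrees and in-neighborhoods of specific vertices, no structural graph-theoretic machinery is required; a careful enumeration of the four listed arcs suffices.

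First I would establish the non-derivability part. I would scan the four ordered pairs in $A(D_1)$ and check, for each, whether its second coordinate is $v_2$ or $v_4$. None of $(v_2,v_1)$, $(v_2,v_3)$, $(v_4,v_1)$, $(v_4,v_3)$ has $v_2$ or $v_4$ as its head, so no arc ends at $v_2$ and none ends at $v_4$. Under the definition of in-degree recalled in the Preliminaries, this yields $d^{-}(v_2)=d^{-}(v_4)=0$. Interpreting an arc $(x,y)$ as ``$y$ is derivable from $x$'' in the submanifold network, the vanishing in-degrees translate into the statement that CR and slant submanifolds cannot be derived from any of the other submanifolds represented in $D_1$.

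Next I would treat the derivability part for $v_1$ and $v_3$. By the same inspection, the arcs whose heads equal $v_1$ are exactly $(v_2,v_1)$ and $(v_4,v_1)$, so by the formula $N^{-}_{D_1}(v_1)=\{u\mid (u,v_1)\in A(D_1)\}$ we get $N^{-}_{D_1}(v_1)=\{v_2,v_4\}$. Analogously the arcs with head $v_3$ are $(v_2,v_3)$ and $(v_4,v_3)$, giving $N^{-}_{D_1}(v_3)=\{v_2,v_4\}$. Since these in-neighborhoods coincide and consist precisely of the CR and slant vertices, the holomorphic and anti-invariant submanifolds are each separately derivable from both CR and slant, exactly as asserted.

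There is no genuine obstacle in this proof: everything reduces to reading off the four explicit arcs of $D_1$ and applying the definitions of $d^{-}$ and $N^{-}_{D_1}$ given in Section~2. The only care needed is to avoid swapping heads and tails when checking arc membership, and to state the submanifold interpretation in the same orientation convention (arc $(x,y)$ meaning derivability of $y$ from $x$) used throughout the construction of $D_1$.
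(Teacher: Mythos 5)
Your proof is correct and matches the paper's treatment: the paper gives no separate argument for Corollary~\ref{cor31}, presenting it as an immediate consequence of inspecting the four arcs of $D_1$ and applying the definitions of $d^{-}$ and $N^{-}_{D_1}$ from the Preliminaries, which is exactly what you do. Nothing further is needed.
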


We now recall the notion of hemi-slant submanifolds of an almost Hermitian manifold. Let $M$ be an almost Hermitian manifold and $N$ a real submanifold of $M$. Then we say that $N$ is a hemi-slant submanifold \cite{carriazoalfonso2000,sahin2009warped} if there exist two orthogonal distributions $\mathcal{D}^\perp$ and $\mathcal{D}^\theta$ on $N$ such that
\begin{enumerate}
\item $TN$ admits the orthogonal direct decomposition $TN = \mathcal{D}^\perp \oplus \mathcal{D}^\theta$.
\item The distribution $\mathcal{D}^\perp$ is an anti-invariant distribution, i.e., $J\mathcal{D}^\perp \subset TM^\perp$.
\item The distribution $\mathcal{D}^\theta$ is slant with slant angle $\theta$.
\end{enumerate}
It is easy to see that if $\mathcal{D}^\perp=\{0\}$, $N$ becomes a slant submanifold with a slant angle $\theta$. If $\mathcal{D}^\theta=\{0\}$, then $N$ becomes an anti-invariant submanifold. Moreover if $\theta=0$, then $N$ becomes a CR-submanifold. Furthermore, if $\mathcal{D}^\perp=\{0\}$ and $\theta=0$, then $N$ becomes a holomorphic submanifold. We denote hemi-slant submanifolds by $v_6$.

Digraph $D_2 = (V,A)$ is an extension of $D_1$, and has five vertices, $V(D_2) = \{v_1,v_2,v_3,v_4,v_6\}$, and seven arcs, $A(D_2)=\{(v_2,v_1), (v_2,v_3), (v_4,v_1), (v_4,v_3), (v_6,v_1), (v_6,v_2), (v_6,v_3)\}$ in Fig. \ref{fig2}. $D_2$ has the maximum length of two as the longest path. It has 2 vertices ($v_4$ and $v_6$) which are not reachable. Topological sort of $D_2$ is $v_6-v_4-v_2-v_3-v_1$. $rad(D_2)=1$, the radius of $D_2$ is $v_2 \to v_1$. $diam(D_2)=1$, the diameter of $D_2$ is the same as the radius. Also, there is no center vertex but three peripheral vertices such as $v_2$, $v_4$ and $v_6$.

    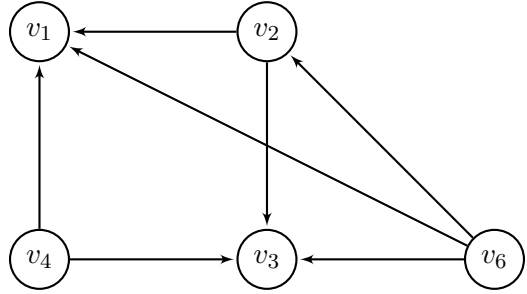
\begin{figure}
    \centering
\begin{tikzpicture}[->,>=stealth',shorten >=1pt,thick]
\tikzset{vertex/.style = {shape=circle,draw,minimum size=1.5em}}
\tikzset{edge/.style = {->,> = latex'}}
\node[vertex] (1) at  (0,0) {$v_1$};
\node[vertex] (2) at  (3,0) {$v_2$};
\node[vertex] (3) at  (3,-3) {$v_3$};
\node[vertex] (4) at  (0,-3) {$v_4$};
\node[vertex] (6) at  (6,-3) {$v_6$};
\draw[edge] (2) to (1);
\draw[edge] (2) to (3);
\draw[edge] (4) to (1);
\draw[edge] (4) to (3);
\draw[edge] (6) to (1);
\draw[edge] (6) to (2);
\draw[edge] (6) to (3);
\end{tikzpicture}
    \caption{Digraph $D_2$ built by submanifolds in $D_1$ and the hemi-slant submanifold}
    \label{fig2}
    \end{figure}

\begin{theorem}
\label{teo32}
For the digraph $D_2$ created by adding the hemi-slant submanifolds as vertex $v_6$ to the $D_1$,
\begin{enumerate}[i]
\item $D_2$ is a three-partite digraph.
\item The maximum matching is 2.
\item The independence number is 2.
\item The chromatic number is 3.
\item $D_2$ has no directed spanning tree.
\item The domination number is 2.
\end{enumerate}
\end{theorem}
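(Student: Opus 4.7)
The plan is to verify each of the six claims by the same strategy used in Theorem \ref{teo31}: exhibit an explicit certificate (partition, matching, independent set, coloring, or dominating set) and then argue optimality. The only structural change relative to $D_1$ is the new vertex $v_6$, which has out-arcs to $v_1, v_2, v_3$ and no incoming arcs, so the case analyses amount to recording how $v_6$ interacts with the existing bipartition of $D_1$.

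For (i), the partition $V_1=\{v_1,v_3\}$, $V_2=\{v_2,v_4\}$, $V_3=\{v_6\}$ works: a direct check against the seven arcs shows no arc has both endpoints in the same part, so $D_2$ is $3$-partite. For (ii), the set $M=\{(v_2,v_1),(v_4,v_3)\}$ is a matching of size $2$, and since $|V(D_2)|=5$ any matching contains at most $\lfloor 5/2\rfloor=2$ arcs, which gives the maximum. For (iii), I exhibit the independent pair $\{v_2,v_4\}$ and then rule out any three-element independent set: every triple in $V(D_2)$ must meet $\{v_1,v_3\}$ and also meet $\{v_2,v_4,v_6\}$, and each vertex in the second set is joined by an arc to both elements of the first, so $\alpha(D_2)=2$. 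For (iv), the $3$-coloring arising from (i) yields $\chi(D_2)\le 3$, and the standard inequality $\chi(D_2)\ge |V(D_2)|/\alpha(D_2)=5/2$ combined with (iii) gives $\chi(D_2)\ge 3$. For (v), I rule out each vertex as a root: $v_1$ and $v_3$ have out-degree zero, $v_2$ and $v_4$ cannot reach $v_6$, and $v_6$ cannot reach $v_4$, so no vertex reaches all others. For (vi), $\{v_4,v_6\}$ dominates via the arcs $(v_4,v_1),(v_4,v_3),(v_6,v_1),(v_6,v_2),(v_6,v_3)$; a single vertex cannot dominate the remaining four because the only vertex of out-degree at least $3$ is $v_6$, whose out-neighborhood misses $v_4$.

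The main obstacle is the lower-bound half of (iii), since it underpins the chromatic-number bound in (iv); overlooking a three-element independent set would derail both claims. The compact pigeonhole argument above is my preferred route, but a brute-force scan through the $\binom{5}{3}=10$ triples is also feasible and serves as a sanity check. The rest of the items are essentially adaptations of the corresponding arguments in Theorem \ref{teo31}, extended by the observation that $v_6$ is a source (in-degree zero) adjacent to every original vertex of $D_1$ except $v_4$.
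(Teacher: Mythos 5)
Your proposal follows the same certificate-plus-optimality strategy as the paper (it even picks an equally valid, slightly different tripartition and matching), and in several places it is more rigorous than the paper's own proof: the bound $\lfloor 5/2\rfloor=2$ for (ii), the inequality $\chi\ge |V|/\alpha$ for (iv), and the explicit vertex-by-vertex root elimination for (v) are all cleaner than the paper's bare assertions of optimality.

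There is, however, one slip in your argument for (iii). You claim that every triple in $V(D_2)$ must meet $\{v_1,v_3\}$, but the triple $\{v_2,v_4,v_6\}$ does not. Pigeonhole only forces a triple to meet the larger part $\{v_2,v_4,v_6\}$; it can avoid the two-element part entirely by equalling the larger part. That case must be handled separately: $\{v_2,v_4,v_6\}$ is not independent because $(v_6,v_2)\in A(D_2)$, so the conclusion $\alpha(D_2)=2$ still holds, and with it your lower bound in (iv). The fix is one line, and your suggested brute-force scan over the $\binom{5}{3}=10$ triples would also catch it, but as written the pigeonhole step is incomplete.
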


\begin{proof}
\begin{enumerate}[i]
\item There exists a partition $V_1=\{v_1, v_3\}$, $V_2=\{v_2\}$ and $V_3=\{v_4,v_6\}$ of $V(D_2)$. These three subsets are three partite sets because of following attributes: $V(D_2) = \bigcup\limits_{i=1}^{3} V_{i}$ and $V_i \cap V_j = \O$ ($i,j=1,2,3$ and $i \neq j$).
Then, $D_2$ is a three-partite digraph.

\item There is an arc subset $M=\{(v_6,v_1),(v_4,v_3)\}$ in $D_2$, and $|M| = 2$. In $M$, there is no common vertices and loops, that is $M$ is a matching. Also, there is no arc subset having greater cardinality than $M$. Therefore, $M$ is maximum matching in $D_2$.

\item The maximum independent set and independence number of $D_2$ is the same as $D_1$. See Theorem \ref{teo31}-iii.

\item The minimum number of disjoint independent sets of $D_2$ is three: $V_1=\{v_1, v_3\}$, $V_2=\{v_2\}$ and $V_3=\{v_4,v_6\}$. Then, chromatic number of $D_2$ is three.

\item No root vertex that contains a directed path from the root to any other vertex in $V(D_2)$. Then, $D_2$ has no directed spanning tree.

\item There is a subset $\widetilde{V}=\{v_4, v_6\} \subset V(D_2)$. Considering this subset, that including the minimum cardinality of vertices in $D_2$ as a dominating set, for each vertex $v \in \widetilde{V}$ and $u \in V(D_2)-\widetilde{V}$, (v,u) is an arc in $D_2$. Clearly, the domination number is two.

\end{enumerate}
\end{proof}

\begin{corollary}\label{cor32}
In the submanifold network represented by $D_2$ in Fig. \ref{fig2}, the submanifolds, slant $(v_4)$ and hemi-slant $(v_6)$, cannot be derived by the other submanifolds, because $d^{-}(v_4) = d^{-}(v_6) = 0$ in $D_2$.
Also, holomorphic $(v_1)$ and anti-invariant $(v_3)$ submanifolds can be derived separately by CR $(v_2)$, slant and hemi-slant since $N^{-}_{D_2}(v_1)=N^{-}_{D_2}(v_3)=\{ v_2, v_4, v_6 \}$.
\end{corollary}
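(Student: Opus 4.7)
The plan is to verify both assertions by directly reading off the arc set
\[
A(D_2)=\{(v_2,v_1),(v_2,v_3),(v_4,v_1),(v_4,v_3),(v_6,v_1),(v_6,v_2),(v_6,v_3)\}
\]
displayed in Figure \ref{fig2}, using the convention that a submanifold $v_j$ is \emph{derivable from} $v_i$ precisely when $(v_i,v_j)\in A(D_2)$, so that the set of submanifolds from which $v_j$ can be derived is exactly $N^{-}_{D_2}(v_j)$, whose size is $d^{-}(v_j)$.

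For the first claim I would scan the arc list and observe that no ordered pair in $A(D_2)$ has $v_4$ or $v_6$ as its second coordinate. Hence by the definition of in-degree recalled in the Preliminaries, $d^{-}(v_4)=|N^{-}_{D_2}(v_4)|=0$ and $d^{-}(v_6)=|N^{-}_{D_2}(v_6)|=0$. Interpreting this in terms of the network, there is no submanifold $w$ in $\{v_1,v_2,v_3,v_6\}$ (resp.\ $\{v_1,v_2,v_3,v_4\}$) from which $v_4$ (resp.\ $v_6$) can be derived, which is exactly what the corollary asserts.

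For the second claim I would collect, from the same arc list, every arc ending at $v_1$ and every arc ending at $v_3$. The arcs ending at $v_1$ are $(v_2,v_1)$, $(v_4,v_1)$, $(v_6,v_1)$, giving $N^{-}_{D_2}(v_1)=\{v_2,v_4,v_6\}$; the arcs ending at $v_3$ are $(v_2,v_3)$, $(v_4,v_3)$, $(v_6,v_3)$, giving $N^{-}_{D_2}(v_3)=\{v_2,v_4,v_6\}$. The equality $N^{-}_{D_2}(v_1)=N^{-}_{D_2}(v_3)$ then follows, and by the same derivability convention this means that both $v_1$ and $v_3$ can be derived from each of CR ($v_2$), slant ($v_4$) and hemi-slant ($v_6$), separately.

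This is essentially a bookkeeping argument with no genuine obstacle; the only mild care is to make sure that the description of the submanifold relations given just before the definition of $D_2$ (namely that hemi-slant specialises to $v_1, v_2, v_3$ under the indicated choices of $\mathcal{D}^\perp$ and $\theta$, and that CR and slant each specialise to $v_1$ and $v_3$) is being faithfully encoded by the listed arcs, so that the graph-theoretic conclusions $d^{-}(v_4)=d^{-}(v_6)=0$ and $N^{-}_{D_2}(v_1)=N^{-}_{D_2}(v_3)=\{v_2,v_4,v_6\}$ genuinely carry the intended geometric meaning.
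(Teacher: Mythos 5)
Your proposal is correct and matches the paper's own (implicit) justification: the paper gives no separate proof for Corollary~\ref{cor32}, instead reading the values $d^{-}(v_4)=d^{-}(v_6)=0$ and $N^{-}_{D_2}(v_1)=N^{-}_{D_2}(v_3)=\{v_2,v_4,v_6\}$ directly off the arc set of $D_2$, exactly as you do. Your added remark about checking that the arcs faithfully encode the geometric specialisations of the hemi-slant, CR and slant submanifolds is a reasonable sanity check but not something the paper itself carries out at this point.
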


We now recall the notion of semi-slant submanifolds of an almost Hermitian manifold. Let $M$ be an almost Hermitian manifold and $N$ a real submanifold of $M$. Then we say that $N$ is a semi-slant submanifold \cite{papaghiuc1994} if there exist two orthogonal distributions $\mathcal{D}$ and $\mathcal{D}^\theta$ on $N$ such that
\begin{enumerate}
\item $TN$ admits the orthogonal direct decomposition $TN = \mathcal{D} \oplus \mathcal{D}^\theta$.
\item The distribution $\mathcal{D}$ is an invariant distribution, i.e., $J(\mathcal{D})=\mathcal{D}$.
\item The distribution $\mathcal{D}^\theta$ is slant with slant angle $\theta$.
\end{enumerate}
It is easy to see that if $\mathcal{D}=\{0\}$, $ N$ becomes a slant submanifold with a slant angle $\theta$. If $\mathcal{D}^\theta=\{0\}$, then $ N$ becomes a holomorphic submanifold. Moreover if $\theta=\frac{\pi}{2}$, then $ N$ becomes a CR-submanifold. Furthermore, if $\mathcal{D}=\{0\}$ and $\theta=\frac{\pi}{2}$, then $ N$ becomes an anti-invariant submanifold. We denote semi-slant submanifolds by $v_5$.

Digraph $D_3 = (V,A)$ is another extension of $D_1$, and has five vertices, $V(D_3) = \{v_1,v_2,v_3,v_4,v_5\}$, and seven arcs, $A(D_3)=\{(v_2,v_1),(v_2,v_3),(v_4,v_1),(v_4,v_3),(v_5,v_2),(v_5,v_3),(v_5,v_4)\}$ in Fig. \ref{fig3}. $D_3$ has the maximum length of two as the longest path. It has a vertex ($v_5$) which is not reachable. Using transitive closure, $D_3$ has only one new direct connection such as $v_5 \to v_1$. Topological sort of $D_3$ is $v_5-v_4-v_2-v_3-v_1$. $rad(D_3)=1$, the radius of $D_3$ is $v_2 \to v_1$. $diam(D_3)=2$, the diameter of $D_3$ is $v_5 \to v_2 \to v_1$. Also, in $D_3$, there are two center vertices as $v_2$ and $v_4$, and one peripheral vertex as $v_5$.

    \begin{figure}
    \centering
\begin{tikzpicture}[->,>=stealth',shorten >=1pt,thick]
\tikzset{vertex/.style = {shape=circle,draw,minimum size=1.5em}}
\tikzset{edge/.style = {->,> = latex'}}
\node[vertex] (1) at  (0,0) {$v_1$};
\node[vertex] (2) at  (3,0) {$v_2$};
\node[vertex] (3) at  (3,-3) {$v_3$};
\node[vertex] (4) at  (0,-3) {$v_4$};
\node[vertex] (5) at  (6,0) {$v_5$};
\draw[edge] (2) to (1);
\draw[edge] (2) to (3);
\draw[edge] (4) to (1);
\draw[edge] (4) to (3);
\draw[edge] (5) to (2);
\draw[edge] (5) to (3);
\draw[edge] (5) to (4);
\end{tikzpicture}
    \caption{Digraph $D_3$ built by submanifolds in $D_1$ and the semi-slant submanifold}
    \label{fig3}
    \end{figure}
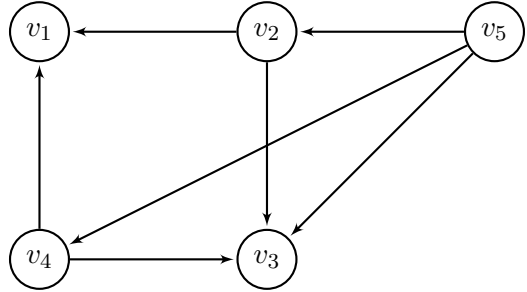

\begin{theorem}
\label{teo33}
For the digraph $D_3$ created by adding the semi-slant submanifolds as vertex $v_5$ to the $D_1$,

\begin{enumerate}[i]
\item $D_3$ is a three-partite digraph.
\item The maximum matching is 2.
\item The independence number is 2.
\item The chromatic number is 3.
\item $D_3$ has a directed spanning tree.
\item The domination number is 2.
\end{enumerate}
\end{theorem}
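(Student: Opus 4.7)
The plan is to imitate the witness-based approach used for Theorems \ref{teo31} and \ref{teo32}: for each of the six claims I will exhibit an explicit vertex partition, arc set, independent set, spanning subdigraph or dominating set, and then verify minimality/maximality by a short case check, exploiting that $|V(D_3)|=5$ and the arc set is small.

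For (i), I would propose $V_1=\{v_1,v_3\}$, $V_2=\{v_2,v_4\}$, $V_3=\{v_5\}$; since no arc of $D_3$ has both endpoints inside a common $V_j$, this gives a $3$-partition in the sense of the preliminaries. For (ii) the matching $M=\{(v_2,v_1),(v_5,v_4)\}$ is clearly independent, and since $5$ vertices can accommodate at most $\lfloor 5/2\rfloor=2$ independent arcs, $M$ is maximum. For (iii) I would take $\{v_1,v_3\}$ and then argue that $\{v_5\}$ cannot be joined to it ($v_5\to v_3$), $\{v_2\}$ cannot ($v_2\to v_1,v_3$), and $\{v_4\}$ cannot ($v_4\to v_1,v_3$); an analogous check rules out every $3$-subset, so the independence number is exactly $2$.

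The subtle item is (iv). The partition in (i) already supplies a proper $3$-coloring, so $\chi(D_3)\le 3$. The key new feature of $D_3$ (absent in $D_1$ and $D_2$) is the triangle $v_3,v_4,v_5$ in the underlying graph, coming from the arcs $(v_4,v_3),(v_5,v_3),(v_5,v_4)$; hence $D_3$ is not bipartite and $\chi(D_3)\ge 3$, giving equality. For (v) I would take $r=v_5$ as the root and display the subdigraph $T$ with arcs $(v_5,v_2),(v_5,v_3),(v_5,v_4),(v_2,v_1)$: its underlying graph is a tree on all five vertices, and each $v\neq v_5$ is reached by a unique directed path from $v_5$, so $T$ is a directed spanning tree; this is the one qualitative change from Theorems \ref{teo31} and \ref{teo32}. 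For (vi) I would take $S=\{v_2,v_5\}$ (or equivalently $\{v_4,v_5\}$), check that $v_5\to v_4$, $v_2\to v_3$, $v_2\to v_1$ cover $V(D_3)\setminus S$, and then rule out any single-vertex dominating set by noting that no vertex is the tail of arcs to all four remaining vertices ($v_5$ misses $v_1$; $v_2$ and $v_4$ each miss two; $v_1,v_3$ have out-degree $0$).

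I do not expect a real obstacle; everything is a finite check on a $5$-vertex digraph. The only place where I would be careful is (iv), where the existence of the triangle $v_3 v_4 v_5$ in the underlying graph is what forces the chromatic number strictly above $2$ and distinguishes $D_3$ from $D_1$; if one overlooked this, one might mistakenly declare $D_3$ bipartite.
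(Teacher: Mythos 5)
Your proposal is correct and follows essentially the same witness-based route as the paper: the same tripartition $\{v_1,v_3\},\{v_2,v_4\},\{v_5\}$, the same spanning tree rooted at $v_5$, and equivalent (if differently chosen) witnesses for the matching, the maximum independent set, and the dominating set. Your optimality justifications --- the triangle $v_3v_4v_5$ in the underlying graph forcing $\chi(D_3)\ge 3$, the $\lfloor 5/2\rfloor$ bound on independent arcs, and the explicit elimination of singleton dominating sets --- are in fact more explicit than the paper's, which largely asserts maximality and minimality by inspection.
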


\begin{proof}
\begin{enumerate}[i]
\item There exists a partition $V_1=\{v_1,v_3\}$, $V_2=\{v_2, v_4\}$ and $V_3=\{v_5\}$ of $V(D_3)$ as three partite sets in $D_3$, and the subsets provide following properties: $V(D_3) = \bigcup\limits_{i=1}^{3} V_{i}$ and $V_i \cap V_j = \O$ ($i,j=1,2,3$ and $i \neq j$). In that case, $D_3$ is a three-partite digraph.

\item There is an arc subset $M=\{(v_2,v_1),(v_4,v_3)\}$ in $D_3$, and $|M| = 2$. Because of no common vertices and no loops in $M$, $M$ is a matching. Furthermore, $M$ has the maximum cardinality so that $M$ is the maximum matching in $D_3$.

\item The maximum independent set and independence number of $D_3$ is the same as $D_1$. See Theorem \ref{teo31}-iii.

\item The minimum number of disjoint independent sets of $D_3$ is three: $V_1=\{v_1, v_3\}$, $V_2=\{v_2, v_4\}$ and $V_3=\{v_5\}$. It follows that the chromatic number of $D_3$ is three.

\item $D_3$ has a unique directed spanning tree of length 4 and rooted at $v_5$ such as in Fig. \ref{fig4}. It also means that there is a transformation from submanifolds $v_5$ to all other submanifolds in $D_3$.

    \begin{figure}[h]
    \centering
\begin{tikzpicture}[->,>=stealth',shorten >=1pt,thick]
\tikzset{vertex/.style = {shape=circle,draw,minimum size=1.5em}}
\tikzset{edge/.style = {->,> = latex'}}
\node[vertex] (1) at  (0,0) {$v_1$};
\node[vertex] (2) at  (3,0) {$v_2$};
\node[vertex] (3) at  (3,-3) {$v_3$};
\node[vertex] (4) at  (0,-3) {$v_4$};
\node[vertex] (5) at  (6,0) {$v_5$};
\draw[edge] (2) to (1);
\draw[edge] (5) to (2);
\draw[edge] (5) to (3);
\draw[edge] (5) to (4);
\end{tikzpicture}
    \caption{Directed spanning tree in $D_3$}
    \label{fig4}
    \end{figure}
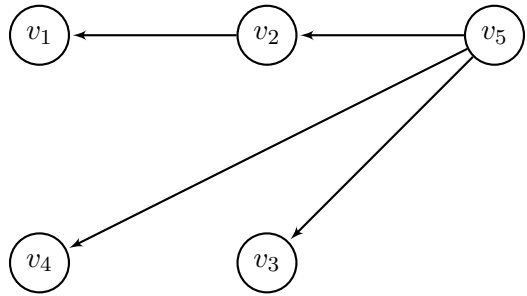

\item There is a subset $\widetilde{V}=\{v_4, v_5\} \subset V(D_3)$. According to this subset, that having the minimum cardinality, and for each vertex $v \in \widetilde{V}$ and $u \in V(D_3)-\widetilde{V}$, (v,u) is an arc in $D_3$, the domination number is two.
\end{enumerate}
\end{proof}

\begin{corollary}\label{cor33}
In the submanifold network represented by $D_3$ in Fig. \ref{fig3}, while no submanifolds can be transformed to semi-slant $(v_5)$ submanifold since $N^{-}_{D_3}(v_5)=\emptyset$, all other submanifolds (holomorphic $(v_1)$, CR $(v_2)$, anti-invariant $(v_3)$ and slant $(v_4)$) can be obtained from semi-slant submanifold because of existence of a directed spanning tree with a root vertex $v_5$ (Fig. \ref{fig4}).
\end{corollary}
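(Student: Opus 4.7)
The plan is to verify the two assertions of Corollary \ref{cor33} directly, by combining a straightforward inspection of the arc set of $D_3$ with the conclusion of Theorem \ref{teo33}(v). The corollary has two independent claims, namely that no other submanifold derives the semi-slant one and that every other submanifold is derived from the semi-slant one; each reduces to checking a single property of the digraph $D_3$ already exhibited in Figure \ref{fig3}.

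First, I would confirm that semi-slant cannot be obtained from any other submanifold in the network. From the arc list $A(D_3)=\{(v_2,v_1),(v_2,v_3),(v_4,v_1),(v_4,v_3),(v_5,v_2),(v_5,v_3),(v_5,v_4)\}$, I would check each arc in turn and observe that none of them terminates at $v_5$. By the definition of in-neighborhood recalled in the preliminaries, $N^{-}_{D_3}(v_5)=\{u \mid (u,v_5)\in A(D_3)\}=\emptyset$. Under the correspondence fixed in Section~3 between vertices and submanifold classes, this is precisely the statement that none of holomorphic, CR, anti-invariant, or slant submanifolds can be specialized so as to produce the semi-slant submanifold, which is the first half of the corollary.

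Second, for the opposite direction I would invoke Theorem \ref{teo33}(v), which established that $D_3$ admits a directed spanning tree $T$ rooted at $v_5$, displayed in Figure \ref{fig4}. By the definition of directed spanning tree recalled in the preliminaries, there is a unique directed $(v_5,v)$-path in $T$ for every $v \in V(D_3)\setminus\{v_5\}$; since $T$ is a subdigraph of $D_3$, this path exists in $D_3$ as well, so each of $v_1,v_2,v_3,v_4$ is reachable from $v_5$ in $D_3$. Reading reachability as derivability of one submanifold class from another, this gives the second half of the corollary.

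The argument is entirely routine once Theorem \ref{teo33} is in hand, so I do not expect any real obstacle. The only care needed is in matching the graph-theoretic facts $N^{-}_{D_3}(v_5)=\emptyset$ and ``spanning tree rooted at $v_5$'' to the informal derivation language used in the corollary, but since the correspondence between submanifold classes and vertices has already been fixed in the text, that translation is immediate.
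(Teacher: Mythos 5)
Your proposal is correct and follows essentially the same route as the paper, which treats Corollary \ref{cor33} as an immediate consequence of inspecting the arc set of $D_3$ (no arc terminates at $v_5$, so $N^{-}_{D_3}(v_5)=\emptyset$) together with Theorem \ref{teo33}(v) and the directed spanning tree rooted at $v_5$ in Fig.~\ref{fig4}. The only difference is that you spell out the reachability argument explicitly, which the paper leaves implicit.
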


Digraph $D_4 = (V,A)$ has six vertices, $V(D_4) = \{v_1,v_2,v_3,v_4,v_5,v_6\}$, and 10 arcs, $A(D_4)=\{(v_2,v_1),(v_2,v_3),(v_4,v_1),(v_4,v_3),(v_5,v_2),(v_5,v_3),(v_5,v_4),(v_6,v_1),(v_6,v_2),(v_6,v_3)\}$ in Fig. \ref{fig5}. $D_4$ has the maximum length of two as the longest path. It has 2 vertices ($v_5$ and $v_6$) which are not reachable. Using transitive closure, $D_4$ has only one new direct connection such as $v_5 \to v_1$. The topological sort of $D_4$ is $v_6-v_5-v_4-v_2-v_3-v_1$. $rad(D_4)=1$, the radius of $D_4$ is $v_2 \to v_1$. $diam(D_4)=2$, the diameter of $D_4$ is $v_5 \to v_2 \to v_1$. Also, in $D_4$, there are three center vertices as $v_2$, $v_4$ and $v_6$, and one peripheral vertex as $v_5$.

    \begin{figure}
    \centering
\begin{tikzpicture}[->,>=stealth',shorten >=1pt,thick]
\tikzset{vertex/.style = {shape=circle,draw,minimum size=1.5em}}
\tikzset{edge/.style = {->,> = latex'}}
\node[vertex] (1) at  (0,0) {$v_1$};
\node[vertex] (2) at  (3,0) {$v_2$};
\node[vertex] (3) at  (3,-3) {$v_3$};
\node[vertex] (4) at  (0,-3) {$v_4$};
\node[vertex] (5) at  (6,0) {$v_5$};
\node[vertex] (6) at  (6,-3) {$v_6$};
\draw[edge] (2) to (1);
\draw[edge] (2) to (3);
\draw[edge] (4) to (1);
\draw[edge] (4) to (3);
\draw[edge] (5) to (2);
\draw[edge] (5) to (3);
\draw[edge] (5) to (4);
\draw[edge] (6) to (1);
\draw[edge] (6) to (2);
\draw[edge] (6) to (3);
\end{tikzpicture}

    \caption{Digraph $D_4$ built by submanifolds in $D_3$ and the hemi-slant submanifold}
    \label{fig5}
    \end{figure}
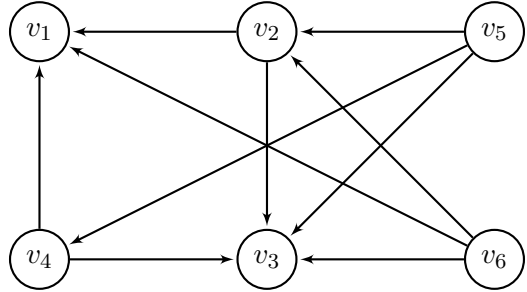

\begin{theorem}
\label{teo34}
For the digraph $D_4$ created by adding the hemi-slant submanifolds as vertex $v_6$ to the $D_3$,

\begin{enumerate}[i]
\item $D_4$ is a three-partite digraph.
\item $D_4$ has a perfect matching.
\item The independence number is 2.
\item The chromatic number is 3.
\item $D_4$ has no directed spanning tree.
\item The domination number is 2.
\end{enumerate}
\end{theorem}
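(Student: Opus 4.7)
The overall plan is to follow the template set by the proofs of Theorems~\ref{teo31}--\ref{teo33}: for each item, exhibit an explicit object read off from the arc list of $D_4$ and then verify optimality by a short direct argument. The whole verification is driven by the six out-neighborhoods obtainable from the given arc set: $v_1$ and $v_3$ are sinks, $N^+(v_2)=N^+(v_4)=\{v_1,v_3\}$, $N^+(v_5)=\{v_2,v_3,v_4\}$, and $N^+(v_6)=\{v_1,v_2,v_3\}$.

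Items (i), (iii) and (iv) are handled together using the partition $V_1=\{v_1,v_3\}$, $V_2=\{v_2,v_4\}$, $V_3=\{v_5,v_6\}$. Each block is easily seen to contain no arc of $D_4$, which simultaneously gives the three-partite structure of (i) and the upper bound $\chi(D_4)\le 3$ for (iv). For (iii) one checks that any candidate independent set of size three must meet at least two distinct blocks, and then runs through the handful of possible pairs to show that an arc of $D_4$ always lies inside; combined with the explicit independent set $\{v_2,v_4\}$ of size two, this gives $\alpha(D_4)=2$. The bound $\chi(D_4)\ge \lceil n/\alpha\rceil = 3$ then completes (iv).

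For (ii), I plan to exhibit the matching $M=\{(v_6,v_1),(v_5,v_4),(v_2,v_3)\}$, whose arcs are pairwise vertex-disjoint and saturate all six vertices of $D_4$, so $M$ is perfect. For (v), since $d^-(v_5)=d^-(v_6)=0$, no vertex distinct from $v_5$ can reach $v_5$ and no vertex distinct from $v_6$ can reach $v_6$; therefore no single vertex can serve as the root of a directed spanning tree. For (vi), the set $\{v_5,v_6\}$ is dominating because $N^+(v_5)\cup N^+(v_6)=\{v_1,v_2,v_3,v_4\}$ exhausts $V(D_4)\setminus\{v_5,v_6\}$, and no singleton dominates the remaining five vertices: the sinks $v_1,v_3$ have out-degree zero, while each of $v_2,v_4,v_5,v_6$ has out-degree at most three. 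The one step that requires genuine case analysis rather than direct inspection of the arc list is the upper bound in (iii); every other item reduces to exhibiting a small explicit witness.
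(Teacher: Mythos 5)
Your proposal is correct and uses essentially the same witnesses as the paper: the partition $V_1=\{v_1,v_3\}$, $V_2=\{v_2,v_4\}$, $V_3=\{v_5,v_6\}$, a three-arc matching saturating all six vertices (your $\{(v_6,v_1),(v_5,v_4),(v_2,v_3)\}$ is just as valid as the paper's $\{(v_2,v_1),(v_5,v_4),(v_6,v_3)\}$), and the dominating set $\{v_5,v_6\}$. Where you genuinely depart from --- and improve on --- the paper is in the lower bounds, which the paper largely asserts or delegates: you get $\chi(D_4)\ge 3$ from $\chi\ge\lceil n/\alpha\rceil=\lceil 6/2\rceil$, you rule out a directed spanning tree by noting that a root would have to reach both in-degree-zero vertices $v_5$ and $v_6$ while coinciding with at most one of them, and you exclude a dominating singleton via the out-degree bound ($\max d^+\le 3<5$); the paper instead refers item (iii) back to Theorem~\ref{teo31} and treats (iv)--(vi) by inspection. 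Your domination argument is also more careful than the paper's, which literally claims $(v,u)\in A(D_4)$ for \emph{every} $v\in\{v_5,v_6\}$ and every outside $u$ (false, e.g.\ $(v_5,v_1)\notin A(D_4)$), whereas only the union of out-neighborhoods needs to cover the rest. One caution on your item (iii): the step ``a size-three set meets two blocks, then check the possible pairs'' must not be read as ``every cross-block pair spans an arc,'' since $\{v_1,v_5\}$ and $\{v_4,v_6\}$ are arcless; the finite verification has to be carried out on the triples themselves (all twenty of them, or the triples inside each two-block union plus the transversal triples), each of which does contain an arc, so $\alpha(D_4)=2$ stands.
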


\begin{proof}
\begin{enumerate}[i]
\item There exists a partition $V_1=\{v_1,v_3\}$, $V_2=\{v_2, v_4\}$ and $V_3=\{v_5, v_6\}$ of $V(D_4)$ as three subsets, and these subsets provide that $V(D_4) = \bigcup\limits_{i=1}^{3} V_{i}$ and $V_i \cap V_j = \O$ ($i,j=1,2,3$ and $i \neq j$). Under these conditions, $D_4$ is a three-partite digraph.

\item There is an arc subset $M=\{(v_2,v_1),(v_5,v_4),(v_6,v_3)\}$ in $D_4$, and $|M| = 3$. On conditions that no common vertices and no loops in $M$ and $|M|=\frac{|A(D_4)|}{2}$, $M$ is perfect matching that's why $D_4$ has a matching also perfect.

\item The maximum independent set and the independence number of $D_4$ is the same as $D_1$. See Theorem \ref{teo31}-iii.

\item The minimum number of disjoint independent sets of $D_4$ is three: $V_1=\{v_1, v_3\}$, $V_2=\{v_2, v_4\}$ and $V_3=\{v_5, v_6\}$. Then, the chromatic number of $D_4$ is three.

\item No root vertex that contains a directed path from the root to any other vertex in $V(D_4)$. Then, $D_4$ has no directed spanning tree.

\item There is a subset $\widetilde{V}=\{v_5, v_6\} \subset V(D_4)$. According to this subset, that having the minimum cardinality, and for each vertex $v \in \widetilde{V}$ and $u \in V(D_4)-\widetilde{V}$, (v,u) is an arc in $D_4$ so that the domination number is two.
\end{enumerate}
\end{proof}

\begin{corollary}\label{cor34}
In the submanifold network represented by $D_4$ in Fig. \ref{fig5}, semi-slant $(v_5)$ and hemi-slant $(v_6)$ submanifolds cannot be obtained by any other submanifolds because $d^{-}(v_5) = d^{-}(v_6) = 0$. Besides, no submanifolds can be derived from holomorphic $(v_1)$ and anti-invariant $(v_3)$ submanifolds since $N^{-}_{D_4}(v_1) = N^{-}_{D_4}(v_3) = \emptyset$.
\end{corollary}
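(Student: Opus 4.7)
The plan is to establish both assertions of Corollary~\ref{cor34} by direct inspection of the ten-element arc list of $D_4$ given just before Fig.~\ref{fig5}, following the same pattern already used in Corollaries~\ref{cor31}--\ref{cor33}. I will compute the relevant in- and out-neighborhoods by eye, and then restate the conclusion under the now-standard reading (established in the preceding corollaries) that an arc $(u,v)\in A(D_4)$ encodes the fact that the submanifold attached to $v$ can be derived from the submanifold attached to $u$.

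For the first sentence, I would scan $A(D_4)$ and check the second coordinate of each arc. Every terminal vertex appearing in the list lies in $\{v_1,v_2,v_3,v_4\}$; neither $v_5$ nor $v_6$ ever occurs as the head of an arc. By the definition of in-degree recalled in the Preliminaries, this gives $d^{-}(v_5)=d^{-}(v_6)=0$ at once. Because an arc $(u,v_5)$ or $(u,v_6)$ would be needed to exhibit a submanifold from which $v_5$ or $v_6$ could be obtained, the absence of such arcs proves that the semi-slant and hemi-slant submanifolds cannot be obtained from any other submanifold in the network, which is exactly the first claim.

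For the second sentence, I would perform the symmetric check on initial vertices. Reading off the arc list, every first coordinate lies in $\{v_2,v_4,v_5,v_6\}$, and no arc begins at $v_1$ or $v_3$. Consequently the neighborhood sets in the corollary's statement are empty, and the derivation interpretation gives the stated conclusion that no submanifold in the network is derivable from the holomorphic or the anti-invariant submanifold.

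The main obstacle is not mathematical --- the argument is finite case-checking against a ten-arc list --- but notational. One has to be careful with the convention established in the Preliminaries so that ``$u$ derives $v$'' is correctly matched with the direction of the arc $(u,v)$ used in the previous corollaries; keeping this bookkeeping straight is the only real content of the proof.
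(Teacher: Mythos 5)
Your arc-by-arc inspection is in substance exactly what the paper does: Corollary~\ref{cor34} carries no separate proof beyond the inline ``because'' clauses, and your scan of the heads and tails of the ten arcs of $D_4$ is that same verification. The first half is fine: no arc of $A(D_4)$ ends at $v_5$ or $v_6$, so $d^{-}(v_5)=d^{-}(v_6)=0$. The one point you need to fix is in the second half. What your tail-check actually establishes is that no arc \emph{starts} at $v_1$ or $v_3$, i.e.\ $d^{+}(v_1)=d^{+}(v_3)=0$, equivalently $N^{+}_{D_4}(v_1)=N^{+}_{D_4}(v_3)=\emptyset$ under the paper's definition $N^{+}_{D}(v)=\{u \mid (v,u)\in A(D)\}$. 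The sets literally written in the corollary, $N^{-}_{D_4}(v_1)$ and $N^{-}_{D_4}(v_3)$, are by the paper's own definition $N^{-}_{D}(v)=\{u \mid (u,v)\in A(D)\}$ the \emph{predecessor} sets, and these equal $\{v_2,v_4,v_6\}$, not $\emptyset$ (compare Corollary~\ref{cor32}, where $N^{-}_{D_2}(v_1)=\{v_2,v_4,v_6\}$ is used in precisely that sense). So your sentence ``the neighborhood sets in the corollary's statement are empty'' is false as written; the superscript in the printed corollary is evidently a typo for $N^{+}$ (consistent with Corollary~\ref{cor36}, which uses $d^{+}$ for the same assertion), and your proof should say so explicitly and conclude $N^{+}_{D_4}(v_1)=N^{+}_{D_4}(v_3)=\emptyset$ rather than silently endorsing the printed $N^{-}$ claim.
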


We now recall the notion of bi-slant submanifolds of an almost Hermitian manifold. Let $M$ be an almost Hermitian manifold and $ N$ a real submanifold of $M$. Then we say that $ N$ is a bi-slant submanifold \cite{carriazoalfonso2000} if there exist two orthogonal distributions $\mathcal{D}^{\theta_1}$ and $\mathcal{D}^{\theta_2}$ on $ N$ such that
\begin{enumerate}
\item $TN$ admits the orthogonal direct decomposition $TN=\mathcal{D}^{\theta_1} \oplus \mathcal{D}^{\theta_2}$.
\item The distributions $\mathcal{D}^{\theta_1}$ and $\mathcal{D}^{\theta_2}$ are slant distributions with slant angles $\theta_1$ and $\theta_2$.
\end{enumerate}
It is easy to see that if $\mathcal{D}^{\theta_1}=\{0\}$ (or $\mathcal{D}^{\theta_2}=\{0\}$), $ N$ becomes a slant submanifold with a slant angle $\theta_1$. If $\theta=\theta_1=\theta_2=\{0\}$, then $ N$ becomes a holomorphic submanifold. If $\theta=\theta_1=\theta_2=\frac{\pi}{2}$, then $ N$ becomes an anti-invariant submanifold. Moreover if $\theta_1=\frac{\pi}{2}$ and $\theta_2=0$, then $ N$ becomes a CR-submanifold. Furthermore, $\theta_1=\frac{\pi}{2}$ and $\theta_1=0$, then $ N$ becomes a hemi-slant submanifold and semi-slant submanifold, respectively. We denote bi-slant submanifolds by $v_7$.

Digraph $D_5 = (V,A)$ has seven vertices, $V(D_5) = \{v_1,v_2,v_3,v_4,v_5,v_6,v_7\}$, and 12 arcs, $A(D_4)=\{(v_2,v_1),(v_2,v_3),(v_4,v_1),(v_4,v_3),(v_5,v_2),(v_5,v_3),\\(v_5,v_4),(v_6,v_1),(v_6,v_2),(v_6,v_3),(v_7,v_5),(v_7,v_6)\}$ in Fig. \ref{fig6}. $D_5$ has the maximum length of three as the longest path. It has a vertex ($v_7$) which is not reachable. Using transitive closure, $D_5$ has five new direct connections such as $v_5 \to v_1$, $v_7 \to v_1$, $v_7 \to v_2$, $v_7 \to v_3$ and $v_7 \to v_4$. Topological sort of $D_5$ is $v_7-v_6-v_5-v_4-v_2-v_3-v_1$. $rad(D_5)=1$, the radius of $D_5$ is $v_2 \to v_1$. $diam(D_5)=2$, the diameter of $D_5$ is $v_5 \to v_2 \to v_1$. Also, in $D_5$, there are three center vertices as $v_2$, $v_4$ and $v_6$, and two peripheral vertices as $v_5$ and $v_7$.

	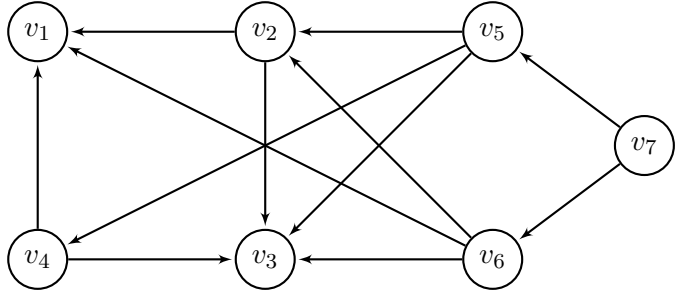
\begin{figure}
    \centering
\begin{tikzpicture}[->,>=stealth',shorten >=1pt,thick]
\tikzset{vertex/.style = {shape=circle,draw,minimum size=1.5em}}
\tikzset{edge/.style = {->,> = latex'}}
\node[vertex] (1) at  (0,0) {$v_1$};
\node[vertex] (2) at  (3,0) {$v_2$};
\node[vertex] (3) at  (3,-3) {$v_3$};
\node[vertex] (4) at  (0,-3) {$v_4$};
\node[vertex] (5) at  (6,0) {$v_5$};
\node[vertex] (6) at  (6,-3) {$v_6$};
\node[vertex] (7) at  (8,-1.5) {$v_7$};
\draw[edge] (2) to (1);
\draw[edge] (2) to (3);
\draw[edge] (4) to (1);
\draw[edge] (4) to (3);
\draw[edge] (5) to (2);
\draw[edge] (5) to (3);
\draw[edge] (5) to (4);
\draw[edge] (6) to (1);
\draw[edge] (6) to (2);
\draw[edge] (6) to (3);
\draw[edge] (7) to (5);
\draw[edge] (7) to (6);
\end{tikzpicture}
    \caption{Digraph $D_5$ built by submanifolds in $D_4$ and the bi-slant submanifold}
    \label{fig6}
    \end{figure}

\begin{theorem}
\label{teo35}
For the digraph $D_5$ created by adding the bi-slant submanifolds as vertex $v_7$ to the $D_4$,
\begin{enumerate}[i]
\item $D_5$ is a three-partite digraph.
\item The maximum matching is 3.
\item The independence number is 3.
\item The chromatic number is 3.
\item $D_5$ has a directed spanning tree.
\item The domination number is 3.
\end{enumerate}
\end{theorem}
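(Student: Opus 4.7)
The proof proposal is to verify each of the six properties directly from the arc list of $D_5$, following the same pattern that worked in Theorems~\ref{teo31}--\ref{teo34}. In each case I would first exhibit an explicit witness (a partition, a matching, an independent set, a coloring, a spanning tree, or a dominating set) and then argue minimality/maximality by a short combinatorial argument.

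For (i), I would propose the tripartition $V_1=\{v_1,v_3\}$, $V_2=\{v_2,v_4,v_7\}$, $V_3=\{v_5,v_6\}$ and check by inspection that no arc of $A(D_5)$ has both endpoints in the same class (the outgoing arcs of $v_7$ land in $V_3$, and the outgoing arcs of $v_2,v_4$ land in $V_1$). For (iii) the set $\{v_2,v_4,v_7\}$ is already independent, giving lower bound $3$; the matching upper bound is immediate since $|V(D_5)|=7$ is odd, ruling out any matching of size $\ge 4$, and an explicit matching such as $\{(v_2,v_1),(v_5,v_4),(v_7,v_6)\}$ achieves $3$, proving (ii). For (iv), the partition above is a proper $3$-coloring, so $\chi(D_5)\le 3$; the lower bound $\chi(D_5)\ge 3$ follows from the triangle on $\{v_2,v_3,v_5\}$ in the underlying graph (arcs $(v_2,v_3),(v_5,v_2),(v_5,v_3)$).

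For (v) I would exhibit a directed spanning tree rooted at $v_7$, for example
\[
\{(v_7,v_5),(v_7,v_6),(v_5,v_4),(v_5,v_2),(v_5,v_3),(v_2,v_1)\},
\]
and verify that the underlying graph is a tree and that a unique directed path exists from $v_7$ to every other vertex. For (vi) the set $\{v_5,v_6,v_7\}$ is clearly dominating, giving the upper bound $3$. The key observation for the lower bound is that $v_7$ is a source ($N^{-}_{D_5}(v_7)=\emptyset$), so every dominating set must contain $v_7$; once $v_7$ is fixed, a one-more-vertex addition covers at most three of $\{v_1,v_2,v_3,v_4\}$ (since $N^{+}_{D_5}(v_5)=\{v_2,v_3,v_4\}$ and $N^{+}_{D_5}(v_6)=\{v_1,v_2,v_3\}$ are the largest such out-neighbourhoods and neither contains all four), so a $2$-element dominating set is impossible.

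The main step that requires actual thought, rather than direct inspection, is the domination lower bound in (vi): one needs the source argument for $v_7$ plus a case check that no single out-neighbourhood in $D_5\setminus\{v_7\}$ covers $\{v_1,v_2,v_3,v_4\}$. Everything else (tripartition, independence, chromatic number via the triangle, matching bound via parity, spanning tree by exhibition) is essentially a bookkeeping verification against the arc list.
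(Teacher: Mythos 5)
Your proposal is correct and follows essentially the same approach as the paper: exhibit an explicit witness for each property (tripartition, matching, independent set, coloring, spanning tree rooted at $v_7$, dominating set) and argue optimality; your witnesses differ only cosmetically (e.g.\ the matching $\{(v_2,v_1),(v_5,v_4),(v_7,v_6)\}$ and a spanning tree using $(v_2,v_1)$ instead of $(v_6,v_1)$), and all check out against the arc list. In fact your optimality arguments (the parity bound for the matching, the triangle $\{v_2,v_3,v_5\}$ for the chromatic lower bound, and the source-vertex argument forcing $v_7$ into every dominating set) are more explicit than the paper's, which merely asserts maximality/minimality; the only claim you leave at the paper's level of rigor is that no independent set of size $4$ exists.
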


\begin{proof}
\begin{enumerate}[i]
\item There is a partition $V_1=\{v_1,v_3\}$, $V_2=\{v_2, v_4, v_7\}$ and $V_3=\{v_5, v_6\}$ of $V(D_5)$ as three subsets, and these subsets support that $V(D_4) = \bigcup\limits_{i=1}^{3} V_{i}$ and $V_i \cap V_j = \O$ ($i,j=1,2,3$ and $i \neq j$). Then, $D_5$, containing the subsets, is actually a three-partite digraph.

\item $M=\{(v_2,v_1),(v_5,v_4),(v_6,v_3)\}$ is an arc subset in $D_5$, and $|M| = 3$. According to this, $M$, that includes no common vertices and no loops, is a matching. Since no other subset greater cardinality than $M$, $D_5$ has a maximum matching called $M$.

\item The subset $\widetilde{V}=\{v_2, v_4, v_7\}$ is an independent set having maximum cardinality. It also means that there is no direct relationship between any two elements, i.e. submanifolds, in $\widetilde{V}$. Then, the independence number of $D_5$ is three, because $|\widetilde{V}|=3$.

\item The minimum number of disjoint independent sets of $D_5$ is three: $V_1=\{v_1, v_3\}$, $V_2=\{v_2, v_4, v_7\}$ and $V_3=\{v_5, v_6\}$. According to that, three different colors are needed to coloring $D_5$ and that's why the chromatic number of $D_5$ is three.

\item $D_5$ has a directed spanning tree of length 6 and root at $v_7$ such as in Fig. \ref{fig7}. It also means that there is a transformation from submanifolds $v_7$ to all other submanifolds in $D_5$ at most two-step.

	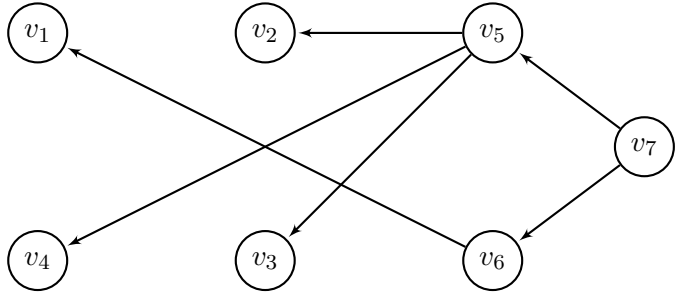
\begin{figure}[h]
    \centering
\begin{tikzpicture}[->,>=stealth',shorten >=1pt,thick]
\tikzset{vertex/.style = {shape=circle,draw,minimum size=1.5em}}
\tikzset{edge/.style = {->,> = latex'}}
\node[vertex] (1) at  (0,0) {$v_1$};
\node[vertex] (2) at  (3,0) {$v_2$};
\node[vertex] (3) at  (3,-3) {$v_3$};
\node[vertex] (4) at  (0,-3) {$v_4$};
\node[vertex] (5) at  (6,0) {$v_5$};
\node[vertex] (6) at  (6,-3) {$v_6$};
\node[vertex] (7) at  (8,-1.5) {$v_7$};
\draw[edge] (5) to (2);
\draw[edge] (5) to (3);
\draw[edge] (5) to (4);
\draw[edge] (6) to (1);
\draw[edge] (7) to (5);
\draw[edge] (7) to (6);
\end{tikzpicture}
    \caption{A directed spanning tree in $D_5$}
    \label{fig7}
	\end{figure}

\item There is a subset $\widetilde{V}=\{v_5, v_6, v_7\} \subset V(D_5)$. According to this subset, that having the minimum cardinality, and for each vertex $v \in \widetilde{V}$ and $u \in V(D_5)-\widetilde{V}$, $(v,u)$ is an arc in $D_5$. The domination number is three.
\end{enumerate}
\end{proof}

\begin{corollary}\label{cor35}
In the submanifold network represented by $D_5$ in Fig. \ref{fig6}, all other submanifolds can be derivated from bi-slant $(v_7)$ submanifold since $v_7$ is the root vertex of the directed spanning tree of $D_5$ and $N^{+}_{D_5}(v_7) = \{v_5, v_6\}$ in Fig. \ref{fig7}. Also, no submanifolds can be transformed to bi-slant because $N^{-}_{D_5}(v_7) = \emptyset$.
\end{corollary}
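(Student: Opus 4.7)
The plan is to verify the three separate assertions of the corollary by a direct inspection of the arc set $A(D_5)$ listed just before Fig.~\ref{fig6}, combined with the directed spanning tree already produced in Theorem~\ref{teo35}(v). All three claims reduce to reading off neighborhoods from the explicit list of twelve arcs, so no extra graph-theoretic machinery is needed.

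First, to establish $N^{-}_{D_5}(v_7) = \emptyset$, I would scan every arc $(u,w) \in A(D_5)$ and check whether $w = v_7$. In the given list the second coordinates are drawn only from $\{v_1, v_2, v_3, v_4, v_5, v_6\}$, so no arc terminates at $v_7$. By the definition $N^{-}_{D_5}(v_7)=\{u : (u,v_7) \in A(D_5)\}$ recalled in the preliminaries, this set is empty, which gives in-degree zero at $v_7$ and, in the submanifold-theoretic reading, means that no other submanifold can be transformed into the bi-slant submanifold along an arc of the digraph.

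Next, to compute $N^{+}_{D_5}(v_7)$, I would pick out the arcs whose first coordinate equals $v_7$. From the same arc list these are exactly $(v_7,v_5)$ and $(v_7,v_6)$, yielding $N^{+}_{D_5}(v_7)=\{v_5,v_6\}$, matching the two arrows that $v_7$ contributes on top of $D_4$.

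For the assertion that every other submanifold can be derived from bi-slant, I would appeal to Theorem~\ref{teo35}(v) and the tree drawn in Fig.~\ref{fig7}: it is rooted at $v_7$ and by construction contains a directed $(v_7,v)$-path for every $v \in V(D_5)-\{v_7\}$; equivalently, by Proposition~\ref{prop1}, each such $v$ is reachable from $v_7$ in $D_5$. Interpreting reachability as the possibility of successively specializing the bi-slant submanifold to the submanifold labelling $v$ then yields the derivation claim. There is essentially no obstacle here; the only point requiring mild care is to respect the paper's labelling convention, in which $N^{-}$ collects the tails of arcs into a vertex and $N^{+}$ the heads of arcs out of it, so that ``can be derived from $v_7$'' is correctly matched with forward reachability from $v_7$ rather than with the arcs recorded by $N^{-}$.
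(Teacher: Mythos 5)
Your proposal is correct and follows essentially the same route as the paper, which presents this corollary without a separate proof as an immediate consequence of the arc list of $D_5$ (no arc ends at $v_7$, and exactly $(v_7,v_5)$, $(v_7,v_6)$ start there) together with the directed spanning tree rooted at $v_7$ from Theorem~\ref{teo35}(v). Your added care about the paper's $N^{-}$/$N^{+}$ conventions and the explicit appeal to Proposition~\ref{prop1} for reachability are sound and consistent with the intended reading.
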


Digraph $D_6 = (V,A)$ has also seven vertices as well as $D_5$, $V(D_6) = \{v_1,v_2,v_3,v_4,v_5,v_6,v_7\}$, and 12 arcs, $A(D_6)=\{(v_2,v_1),(v_2,v_3),(v_4,v_1),(v_4,v_3),(v_5,v_1),(v_5,v_2),(v_5,v_3),(v_5,v_4),(v_6,v_1),\\(v_6,v_2),(v_6,v_3),(v_6,v_4),(v_7,v_5),(v_7,v_6)\}$ in Fig. \ref{fig8}. $D_6$ has the maximum length of three as the longest path. It has a vertex ($v_7$) which is not reachable. Using transitive closure, $D_6$ has four new direct connections such as $v_7 \to v_1$, $v_7 \to v_2$, $v_7 \to v_3$ and $v_7 \to v_4$. Topological sort of $D_6$ is $v_7-v_6-v_5-v_4-v_2-v_3-v_1$. $rad(D_6)=1$, the radius of $D_6$ is $v_2 \to v_1$. $diam(D_6)=2$, the diameter of $D_6$ is $v_7 \to v_5 \to v_1$. Also, in $D_6$, there are four center vertices as $v_2$, $v_4$, $v_5$ and $v_6$, and one peripheral vertex as $v_7$.

	\begin{figure}
    \centering
\begin{tikzpicture}[->,>=stealth',shorten >=1pt,thick]
\tikzset{vertex/.style = {shape=circle,draw,minimum size=1.5em}}
\tikzset{edge/.style = {->,> = latex'}}
\node[vertex] (1) at  (0,0) {$v_1$};
\node[vertex] (2) at  (3,-0.5) {$v_2$};
\node[vertex] (3) at  (3,-2.5) {$v_3$};
\node[vertex] (4) at  (0,-3) {$v_4$};
\node[vertex] (5) at  (6,0) {$v_5$};
\node[vertex] (6) at  (6,-3) {$v_6$};
\node[vertex] (7) at  (8,-1.5) {$v_7$};
\draw[edge] (2) to (1.345);
\draw[edge] (2) to (3);
\draw[edge] (4) to (1);
\draw[edge] (4.015) to (3);
\draw[edge] (5) to (1);
\draw[edge] (5.195) to (2);
\draw[edge] (5) to (3);
\draw[edge] (5) to (4);
\draw[edge] (6) to (1);
\draw[edge] (6) to (2);
\draw[edge] (6.165) to (3);
\draw[edge] (6) to (4);
\draw[edge] (7) to (5);
\draw[edge] (7) to (6);
\end{tikzpicture}
    \caption{Digraph $D_6$ built by $D_5$ with arcs $(v_5,v_1)$ and $(v_6,v_4)$}
    \label{fig8}
    \end{figure}
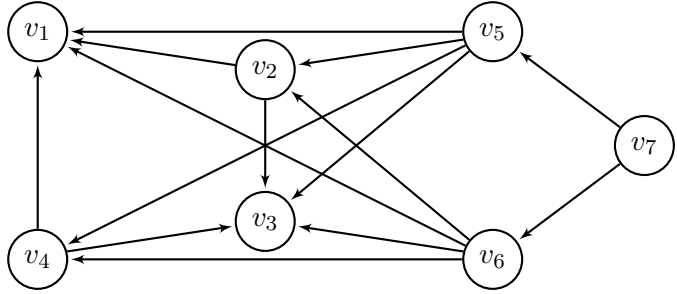

\begin{theorem}
\label{teo36}
For the digraph $D_6$ created by adding two more relations from semi-slant to holomorphic and from hemi-slant to slant as arcs to the $D_5$,
\begin{enumerate}[i]
\item $D_6$ is a three-partite digraph.
\item The maximum matching is 3.
\item The independence number is 3.
\item The chromatic number is 3.
\item $D_6$ contains a directed spanning tree.
\item The domination number is 2.
\end{enumerate}
\end{theorem}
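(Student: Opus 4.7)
The plan is to dispatch each of the six claims by producing an explicit combinatorial witness and verifying optimality by a short counting argument, in the same spirit as Theorems \ref{teo31}--\ref{teo35}. The key observation is that the partition
$$V_1=\{v_1,v_3\},\quad V_2=\{v_2,v_4,v_7\},\quad V_3=\{v_5,v_6\}$$
does triple duty: after a quick check against the arc list in Figure \ref{fig8} that no $V_i$ contains an internal arc, it witnesses the tripartition in (i), supplies a proper 3-colouring for the upper bound in (iv), and contains the independent set $\{v_2,v_4,v_7\}$ used in (iii).

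For (ii) I would take $M=\{(v_7,v_5),(v_6,v_4),(v_2,v_1)\}$, of size three; optimality is forced by $|V(D_6)|=7$. For (iv) the lower bound comes from the triangle $v_5 v_2 v_1$ in the underlying graph, realised by the arcs $(v_5,v_1),(v_5,v_2),(v_2,v_1)$, which rules out any 2-colouring. For (iii) the witness $\{v_2,v_4,v_7\}$ gives the lower bound; for the matching upper bound I would split on whether a purported independent set of size four contains $v_5$ or $v_6$ (each has only one non-neighbour in the underlying graph) and, if neither, reduce to the 4-cycle $v_1 v_2 v_3 v_4$ together with the isolated vertex $v_7$, in which the largest independent set has size three.

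For (v) I would exhibit the directed spanning tree rooted at $v_7$ with arcs $\{(v_7,v_5),(v_7,v_6),(v_5,v_1),(v_5,v_2),(v_5,v_3),(v_5,v_4)\}$, so every other vertex is reachable from $v_7$ along a unique directed path. For (vi) I would take $S=\{v_5,v_7\}$: the arcs $(v_5,v_i)$ for $i\in\{1,2,3,4\}$ together with $(v_7,v_6)$ dominate every vertex outside $S$, and the lower bound is immediate because the maximum out-degree in $D_6$ is $4<6$, so no singleton can dominate the remaining six vertices.

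The only mildly delicate step is the upper bound in (iii); the remaining claims are straightforward once the correct witnesses are identified. The case analysis above, built around the observation that $v_5$ and $v_6$ are the high-degree vertices of the underlying graph, cleanly handles the obstruction, and the rest of the proof is direct verification against the arc list of Figure \ref{fig8}.
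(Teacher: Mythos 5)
Your proposal is correct, and every witness you name checks out against the arc list of $D_6$ (in particular your matching arc $(v_6,v_4)$ and spanning-tree arc $(v_5,v_1)$ are among the two arcs newly added to $D_5$, so they are legitimate even though they differ from the witnesses reused from Theorem \ref{teo35}). The difference from the paper is one of completeness rather than strategy: the paper proves parts (i)--(v) simply by citing the corresponding items of Theorem \ref{teo35} (which merely exhibit the partition, the matching, the independent set, the colouring and the tree, and assert their optimality), and only part (vi) gets a fresh argument, namely the dominating set $\{v_5,v_7\}$ that you also use. You, by contrast, supply the optimality halves that the paper leaves implicit: maximality of the matching from $|V(D_6)|=7$, the upper bound of $3$ on the independence number via the case split on $v_5$ and $v_6$ (each adjacent to all but one vertex in the underlying graph, reducing the remaining case to the $4$-cycle $v_1v_2v_3v_4$ plus the isolated $v_7$), the lower bound $\chi\geq 3$ from the triangle on $v_1,v_2,v_5$, and the lower bound of $2$ on the domination number from the maximum out-degree being $4<6$. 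These additions make your argument self-contained and strictly more rigorous than the paper's, while the underlying approach --- exhibit explicit combinatorial witnesses for each invariant --- is the same.
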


\begin{proof}
\begin{enumerate}[i]
\item See Teorem \ref{teo35}-i.
\item See Teorem \ref{teo35}-ii.
\item See Teorem \ref{teo35}-iii.
\item See Teorem \ref{teo35}-iv.
\item $D_6$ has a directed spanning tree having the same structure as in Fig. \ref{fig7}. See Theorem \ref{teo35}-v.
\item There is a subset $\widetilde{V}=\{v_5, v_7\} \subset V(D_6)$. According to that, the subset has the minimum cardinality while dominating all other vertices, and for each vertex $v \in \widetilde{V}$ and $u \in V(D_6)-\widetilde{V}$, $(v,u)$ is an arc in $D_6$. The domination number is two.
\end{enumerate}
\end{proof}

\begin{corollary}\label{cor36}
In the most comprehensive submanifold network represented by $D_6$ in Fig. \ref{fig7}, just two submanifolds, holomorphic $(v_1)$ and anti-invariant $(v_3)$, are not generative since $d^{+}(v_1) = d^{+}(v_3) = 0$. Besides, bi-slant $(v_7)$ is the most productive submanifold owing to transforming to all other submanifolds.
\end{corollary}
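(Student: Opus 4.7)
The plan is to read off both assertions directly from the arc set
$A(D_6) = \{(v_2,v_1),(v_2,v_3),(v_4,v_1),(v_4,v_3),(v_5,v_1),(v_5,v_2),(v_5,v_3),(v_5,v_4),(v_6,v_1),(v_6,v_2),(v_6,v_3),(v_6,v_4),(v_7,v_5),(v_7,v_6)\}$ listed just before the statement. Since out-degree and reachability are purely combinatorial properties of a finite arc set of size $14$, the whole argument reduces to careful bookkeeping rather than any nontrivial estimate.

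First, I would establish the out-degree claim. For each vertex $v \in V(D_6)$ I would scan $A(D_6)$ and count the arcs of the form $(v,\cdot)$. This gives $d^{+}(v_1)=0$, $d^{+}(v_2)=2$, $d^{+}(v_3)=0$, $d^{+}(v_4)=2$, $d^{+}(v_5)=4$, $d^{+}(v_6)=4$, $d^{+}(v_7)=2$. In particular, $v_1$ and $v_3$ are precisely the vertices whose out-neighborhood $N^{+}_{D_6}$ is empty, so they are the only non-generative submanifolds. Geometrically this reflects the fact that the holomorphic and anti-invariant classes are the two \emph{extremal} cases (the distribution decomposes trivially), which is why none of the other submanifold classes arise as a specialization of them.

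Second, for the productiveness of $v_7$ I would exhibit a directed path from $v_7$ to every other vertex. The arcs $(v_7,v_5)$ and $(v_7,v_6)$ reach $v_5$ and $v_6$ directly. From $v_5$ we already have direct arcs $(v_5,v_1)$, $(v_5,v_2)$, $(v_5,v_3)$, $(v_5,v_4)$, so $v_7 \to v_5 \to v_i$ gives $(v_7,v_i)$-walks for $i \in \{1,2,3,4\}$. By Proposition \ref{prop1}, each walk contains a corresponding $(v_7,v_i)$-path; equivalently, the directed spanning tree from Theorem \ref{teo36}-v, rooted at $v_7$, already provides these paths. Combined with $N^{-}_{D_6}(v_7) = \emptyset$ (no arc in $A(D_6)$ ends at $v_7$), this shows that $v_7$ lies at the top of the reachability order and can be transformed into every other submanifold type.

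The two items of the corollary then follow by assembling these observations: $\{v_1,v_3\}$ is exactly the set $\{v \in V(D_6): d^{+}(v) = 0\}$, and $v_7$ reaches $V(D_6)\setminus\{v_7\}$. I do not anticipate a real obstacle; the only point that needs slight care is distinguishing direct arcs from reachability via transitive closure, but the enumeration above makes both explicit, and the directed spanning tree inherited from Theorem \ref{teo36}-v supplies the reachability statement without further computation.
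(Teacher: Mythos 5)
Your proposal is correct and follows essentially the same route as the paper, which justifies the corollary by reading off $d^{+}(v_1)=d^{+}(v_3)=0$ from the arc set and by appealing to the directed spanning tree rooted at $v_7$ (Theorem \ref{teo36}-v) for the reachability of all other vertices. The only incidental observation is that the paper's text announces ``12 arcs'' for $D_6$ while actually listing the $14$ arcs you enumerate; your count and conclusions are consistent with the listed arc set and with Fig.~\ref{fig8}.
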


Using the seven submanifolds, named as holomorphic, CR, anti-invariant, slant hemi-slant, semi-slant and bi-slant, it is constructed six digraphs, called $D_1, D_2, D_3, D_4, D_5$ and $D_6$, whose vertices are submanifolds and arcs are connections among submanifolds from one to another.

\begin{theorem}
\label{teo37}
Let $D$ be a digraph which indicates digraphs from $\{D_1, D_2, D_3, D_4, D_5, D_6\}$. $D$ provides the following properties:

\begin{enumerate}[i]
\item Simple directed graph.
\item Directed acyclic graph.
\item Weakly connected.
\end{enumerate}
\end{theorem}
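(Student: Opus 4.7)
The plan is to verify each of the three properties for each of the six digraphs $D_1,\ldots,D_6$ by direct inspection of the arc sets that were explicitly listed when the digraphs were defined, exploiting the incremental nature of the construction $D_1 \subset D_2, D_1 \subset D_3 \subset D_4 \subset D_5 \subset D_6$.

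For property (i), I would simply check that every arc $(v_i,v_j)$ in each $A(D_k)$ satisfies $i \neq j$ (no loops) and that no ordered pair appears twice (no multiple arcs). Since each arc set was written out as an explicit finite list of ordered pairs of distinct vertices, this reduces to a mechanical inspection. For property (ii), the most efficient route is to invoke the topological orderings already exhibited in the paragraphs preceding each figure: $D_1$ admits $v_4$-$v_2$-$v_3$-$v_1$, $D_2$ admits $v_6$-$v_4$-$v_2$-$v_3$-$v_1$, and so on through $D_6$, whose topological sort is $v_7$-$v_6$-$v_5$-$v_4$-$v_2$-$v_3$-$v_1$. Since a digraph admits a topological ordering if and only if it is acyclic, this immediately yields (ii). Alternatively, one may appeal to the geometric meaning of the arcs: each arc $(v_i,v_j)$ points from a more general class of submanifold to one of its specializations, so a cycle would force a submanifold class to be a proper specialization of itself, which is absurd.

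For property (iii), it suffices to produce an undirected spanning walk in the underlying graph of each $D_k$. In $D_1$, ignoring arc directions gives the path $v_4$--$v_1$--$v_2$--$v_3$, which visits all four vertices. Each of $D_2,\ldots,D_5$ is constructed by adjoining one new vertex to a previously built digraph, and in every case the new vertex is the tail of at least one arc whose head already belongs to the previously connected subdigraph (for instance, $v_6$ in $D_2$ sends arcs to $v_1,v_2,v_3 \in V(D_1)$, and $v_7$ in $D_5$ sends arcs to $v_5,v_6 \in V(D_4)$). Weak connectivity is therefore preserved by induction on the construction. For $D_6$, the vertex set coincides with $V(D_5)$ and only arcs are added, so weak connectivity is inherited from $D_5$.

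The only genuine obstacle is bookkeeping: one must confirm that none of the listed arc sets accidentally repeats an arc or introduces a self-loop, and that the claimed topological orders are genuine, i.e.\ that every arc $(v_i,v_j)$ places $v_i$ strictly before $v_j$. Both checks are finite and entirely mechanical given the explicit adjacency lists in the preceding paragraphs, so no substantive difficulty arises beyond careful enumeration of cases.
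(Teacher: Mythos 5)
Your proposal is correct in substance and is in fact more careful than the paper's own proof, which essentially just asserts each property. For (ii) the paper merely states that no closed transition list exists, whereas you certify acyclicity via the topological orderings $v_4$-$v_2$-$v_3$-$v_1$, \dots, $v_7$-$v_6$-$v_5$-$v_4$-$v_2$-$v_3$-$v_1$ already exhibited in the text; this is a genuine improvement, since a linear order compatible with all arcs is a checkable witness rather than a restated claim. For (iii) the paper argues that the underlying graph is connected ``because there are no isolated submanifolds,'' which is not a valid inference in general (a graph with no isolated vertices need not be connected); your induction on the incremental construction --- each new vertex is the tail of an arc into the already-connected subdigraph, and $D_6$ only adds arcs to $D_5$ --- repairs this and is the right argument. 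Your part (i) matches the paper's: a finite inspection of the explicit arc lists for loops and repeated pairs.

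One small gap: the paper's definition of ``weakly connected'' requires \emph{both} that $D$ is not strongly connected \emph{and} that its underlying graph is connected, and the paper's proof addresses both halves. You only establish connectivity of the underlying graph. The missing half is immediate from your part (ii) --- an acyclic digraph on at least two vertices cannot be strong, since mutual reachability of two distinct vertices yields a closed walk and hence a cycle by Proposition 1 --- but you should say so explicitly to meet the definition as stated.
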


\begin{proof}
\begin{enumerate}[i]
\item In digraph $D$, there is no more than one relationship between any two submanifolds and no transformations from a submanifold to itself. According to that, $D$ is a simple directed graph.

\item Given a transition list among submanifolds such as $v_1v_2...v_k$, meaning that $v_1$ is the source submanifold and $v_k$ is the sink submanifold. Because $D$ doesn't have any transition list including the same submanifold is both source and also sink, $D$ is acyclic. That's why $D$ is a directed acyclic digraph.

\item $D$ has one pair of submanifolds as a relation at least that they can not mutually be transformed from one to another submanifold. Hence, $D$ is not strongly connected. However, when $D$, that considered as without direction of transformations, is connected, named connectedness of underlying graph because there are no isolated submanifolds. For this reason, $D$ is weakly connected.
\end{enumerate}
\end{proof}

\begin{corollary}
\label{cor38}
Among all digraphs $D_1$, $D_2$, $D_3$, $D_4$, $D_5$, and $D_6$, the digraph $D_6$ has 
\begin{enumerate}
\item[•] the maximum vertex-integrity, and
\item[•] the maximum edge-integrity
\end{enumerate}
as well as the maximum size by Proposition \ref{prop.integrity}.
\end{corollary}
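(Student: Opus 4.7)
The plan is to reduce the entire statement to a single application of Proposition \ref{prop.integrity} by exhibiting each of $D_1,D_2,D_3,D_4,D_5$ as a subdigraph of $D_6$. Once these subdigraph relations are verified, the inequalities $I(D_i)\le I(D_6)$ and $I'(D_i)\le I'(D_6)$ follow immediately, and the maximality of size (number of arcs) drops out of the same arc-set inclusions.

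First I would record the vertex-set inclusions. Since $V(D_6)=\{v_1,v_2,v_3,v_4,v_5,v_6,v_7\}$ contains every vertex used anywhere in the construction, the inclusion $V(D_i)\subseteq V(D_6)$ is immediate for $i=1,\ldots,5$. Next I would compare arc sets directly from their listings: the constructions give $A(D_1)\subseteq A(D_2)$ and $A(D_1)\subseteq A(D_3)\subseteq A(D_4)\subseteq A(D_5)\subseteq A(D_6)$, and the arcs of $D_2$ that are not already in $D_1$ are precisely $(v_6,v_1),(v_6,v_2),(v_6,v_3)$, all of which sit in $A(D_4)$ and hence in $A(D_6)$. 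Chaining these inclusions gives $A(D_i)\subseteq A(D_6)$ for every $i\in\{1,\ldots,5\}$, so each $D_i$ is a subdigraph of $D_6$.

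Applying Proposition \ref{prop.integrity} to each of these inclusions immediately yields $I(D_i)\le I(D_6)$ and $I'(D_i)\le I'(D_6)$, establishing the first two bullets. For the size claim, the arc-set inclusions imply $|A(D_i)|\le|A(D_6)|=14$, and the tally $4,7,7,10,12,14$ for $i=1,\ldots,6$ shows that this inequality is strict for $i\le 5$. The argument is essentially a bookkeeping task, and the only point requiring care is the verification that the arcs added to build $D_2$ (those incident with $v_6$) survive in $D_6$, even though $D_2$ does not lie on the direct chain $D_1\to D_3\to D_4\to D_5\to D_6$; this is a routine check against the arc list of $D_4$.
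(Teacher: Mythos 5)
Your proposal is correct and is essentially the argument the paper intends: the corollary is stated with no explicit proof beyond citing Proposition \ref{prop.integrity}, and your verification that each $D_i$ ($i=1,\ldots,5$) is a subdigraph of $D_6$ (including the off-chain case of $D_2$ via $A(D_2)\subseteq A(D_4)$) is exactly the bookkeeping that justifies that citation. Your arc count $|A(D_6)|=14$ agrees with the explicit arc list and with the caption of Fig.~\ref{fig8} (the ``12 arcs'' in the paper's text for $D_6$ appears to be a typo), so the size claim is also settled.
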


\section{Conclusion}
Manifold learning plays an important role in analyzing data lying on a non-linear space as a part of machine learning. Moreover, the geometric deep learning yields using the concepts of manifolds and graphs together in building convolutional deep structures.  In this paper, using holomorphic submanifolds, anti-invariant submanifolds, CR-submanifolds, slant submanifolds, semi-slant submanifolds, hemi-slant submanifolds and bi-slant submanifolds in almost Hermitian manifolds, it is given relations among them, six different digraphs are created as a network of these submanifolds, and main properties of them are first examined in terms of digraphs. We note that there is a much wider class that includes slant submanifolds. This class was first defined in \cite{Etayo} by Etayo as quasi-slant submanifolds. Later, these submanifolds were called pointwise slant submanifolds in \cite{Chen-Garay} by Chen and Garay. Although we have excluded such submanifolds in this article, our next research will be to examine the connections between these submanifolds and graph theory.

\bibliographystyle{fundam}

\end{document}